\documentclass[11pt,a4paper,reqno]{amsart}
\usepackage{amsmath}
\usepackage{amsfonts}
\usepackage{amssymb}
\usepackage{amscd}
\usepackage{url}
\usepackage{enumerate}
\usepackage[pdftex,bookmarks=true]{hyperref}

\newcommand\R{{\mathbf{R}}}

\renewcommand\P{{\mathbf{P}}}
\newcommand\E{{\mathbf{E}}}

\newcommand\diag{{\operatorname{diag}}}

\newcommand\col{{\mathbf{c}}}
\newcommand\row{{\mathbf{r}}}
\newcommand\perm{{\mathbf{perm}}}
\newcommand\per{{\mathbf{pm}}}
\newcommand\sm{{\mathbf{sm}}}
\newcommand\gm{{\mathbf{gm}}}
\newcommand\dt{{\mathbf{det}}}


\newcommand\eps{\varepsilon}

\newcommand\Bd{{\mathbf d}}
\newcommand\Be{{\mathbf e}}

\newcommand\Bx{{\mathbf x}}
\newcommand\By{{\mathbf y}}

\newcommand\BN{{\mathbf N}}


\newcommand\CA{{\mathcal A}}
\newcommand\CB{{\mathcal B}}

\newcommand\CE{{\mathcal E}}

\newcommand\CG{{\mathcal G}}

\newcommand\CP{{\mathcal P}}

\newcommand\CS{{\mathcal S}}

\newcommand\CX{{\mathcal X}}
\newcommand\CY{{\mathcal Y}}

\newcommand\Ext{\mathcal Ext}




\parindent = 0 pt
\parskip = 12 pt

\textwidth=6in
\oddsidemargin=0in
\evensidemargin=0in

\theoremstyle{plain}
  \newtheorem{theorem}[subsection]{Theorem}
  \newtheorem{conjecture}[subsection]{Conjecture}

  \newtheorem{lemma}[subsection]{Lemma}
  \newtheorem{corollary}[subsection]{Corollary}

  \newtheorem{claim}[subsection]{Claim}

\theoremstyle{definition}

\begin{document}

\title[A general law of large permanent]{A general law of large permanent}

\author{J\'ozsef Balogh}
\address{Department of Mathematical Sciences, University of Illinois at Urbana-Champaign, Urbana, Illinois 61801}
\email{jobal@math.uiuc.edu}

\author{Hoi Nguyen}
\address{Department of Mathematics, The Ohio State University, Columbus, Ohio 43210}
\email{nguyen.1261@math.osu.edu}

\subjclass[2010]{15A15, 26E60, 60B20}
\keywords{Permanent, law of large number}

\thanks{J.~Balogh is partially supported by NSF
 grant DMS-1500121,  Arnold O. Beckman Research Award (UIUC Campus Research Board 15006) and Langan Professional Scholarship.
  H.~Nguyen is partially supported by
  NSF grant DMS-1600782.}

 \maketitle

\begin{abstract}
In this short note we establish a law of large permanent for matrices with entries from an $\BN^2$-indexed stochastic process. This answers a question by Bochi, Iommi and Ponce in \cite{BIP}.
 \end{abstract}

\section{Introduction}\label{section:intro}

Let $M_n=(m_{ij})_{1\le i,j\le n}$ be a square matrix of size $n$ of
real-valued entries. The
permanent of $M_n$ is defined as 

$$\perm(M_n) := \sum_{\pi \in S_n} m_{1\pi(1)} \cdot \ldots\cdot  m_{n \pi(n)}.$$

Let $\Omega_n$ denote the set of  doubly stochastic matrices
$M_n=(m_{ij})_{1\le i,j\le n}$ of size
$n$, that is $0\le m_{ij} \le1$ and $\sum_{j=1}^n m_{ij} =1 $ for every $1\le i\le n$ and $\sum_{i=1}^n
m_{ij}=1$ for every $ 1\le j\le n$. It is well-known that if $M_n\in \Omega_n$ then

\begin{equation}\label{eqn:doubly:W}
\frac{n!}{n^n} \le \perm(M_n) \le 1.
\end{equation}

The upper bound of \eqref{eqn:doubly:W} is elementary, which is attained at permutation matrices; one can also obtain a stability-type
result in this direction: for instance \cite[Theorem A.1]{AN} shows that if $\perm(M_n) \ge n^{-O(1)}$, then all but at most $O(\log n)$ rows (and columns) of $M_n$ contain an entry that is at least 0.9.  

The lower bound of \eqref{eqn:doubly:W} was conjectured by van der Waerden in 1926 \cite{W} and proved in 1981 independently by
Egorychev \cite{E}
and Falikman \cite{F}. Moreover, the minimum of the permanent on $\Omega_n$ is
attained at the matrix $J_n$ of entries $1/n$. 

Following \cite{BIP}, we denote the {\it permanental mean} of a
matrix $M_n$ by 

$$\per(M_n) := \left(\frac{\perm(M_n)}{n!}\right)^{1/n}.$$

Thus for doubly stochastic matrices $M_n\in \Omega_n$, by \eqref{eqn:doubly:W}

\begin{equation}\label{eqn:llp:perm}
1\le \lim_{n\to \infty}  \per(n M_n) \le e.
\end{equation}

Note that by definition, if $M_n=D_n G_n E_n$ where $D_n=\diag(d_i)_{1\le i\le n}, E_n=\diag(e_i)_{1\le i\le n}$ with $d_i,e_i>0$ (in other words, $m_{ij} = d_i g_{ij} e_j$), then 

\begin{equation}\label{eqn:llp:perm:scaling}
\per(M_n)=\gm(\Bd) \per(G_n) \gm(\Be),
\end{equation}

where $\Bd=(d_1,\dots,d_n)$ and $\Be=(e_1,\dots,e_n)$, and $\gm(.)$ is the geometric mean

$$\gm(\Bd):=(\prod_i d_i)^{1/n}, \mbox{ and } \gm(\Be):=(\prod_i e_i)^{1/n}.$$

In other words, \eqref{eqn:llp:perm:scaling} says that the permanental mean $\per(.)$ is homogeneous under matrix scaling.

In this note we will try to relate the permanental mean for matrices of non-negative entries to the so-called {\it scaling mean} $\sm(M_n)$, which is in turn defined as 

$$ \sm(M_n):= \frac{1}{n^2} \inf_{\Bx,\By \in \R_{>0}^n} \frac{\Bx^T M_n
  \By}{\gm(\Bx) \gm(\By)},$$

where $\Bx=(x_1,\dots, x_n)$ and $\By=(y_1,\dots,y_n)$.

Thus the scaling mean can be obtained via an optimization problem. An extremely nice property of the scaling mean for doubly stochastic matrices $M_n$ is that (by using AM-GM inequality)

\begin{equation}\label{eqn:llp:sm}
\sm(n M_n) =1. 
\end{equation}

Another important property, which is again not hard to show, is that $\sm(.)$ is also homogeneous under matrix scaling. In other words, if $M_n=D_n G_n E_n$, with $D_n=\diag(d_i)_{1\le i\le n}, E_n=\diag(e_i)_{1\le i\le n}$ and $d_i,e_i>0$, then 

\begin{equation}\label{eqn:DGE}
\sm(M_n)= \gm(\Bd) \sm(G_n) \gm(\Be).
\end{equation}

The forms $M_n=D_nG_nE_n$ in \eqref{eqn:llp:perm:scaling} and \eqref{eqn:DGE}, with doubly stochastic $G_n$, is called {\it Sinkhorn decomposition}. Not every matrix $M_n$ of non-negative entries can be scaled back to a doubly stochastic matrix through  Sinkhorn decompositions. However, the following beautiful theorem provides a necessary and sufficient condition.

\begin{theorem}\cite{BPS,MO,SK}\label{theorem:Sinkhorn:matrix} A matrix
  $M_n=(a_{ij})_{1\le i,j\le n}$ with non-negative entries has a
  Sinkhorn decomposition $M_n=D_nG_nE_n$ if and only if for each
  positive element $m_{ij}>0$ there exists a permutation $\pi \in S_n$
  such that $\pi(i)=j$ and $m_{1\pi(1)},\dots, m_{n \pi(n)}$ are all
  positive. Moreover, the doubly stochastic matrix $G_n$ is unique and
  the map $M_n \to G_n$ is continuous.
\end{theorem}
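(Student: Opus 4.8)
I will prove the two directions of the equivalence in turn and then deduce uniqueness and continuity. For necessity, suppose $M_n=D_nG_nE_n$ with $G_n$ doubly stochastic and $D_n=\diag(d_i),E_n=\diag(e_j)$, all $d_i,e_j>0$. Since $m_{ij}=d_ig_{ij}e_j$ we have $m_{ij}>0\iff g_{ij}>0$, so it suffices to verify the stated diagonal condition (``total support'') for $G_n$. By the Birkhoff--von Neumann theorem $G_n=\sum_k c_kP_k$ with $c_k>0$ and $P_k$ permutation matrices; if $g_{ij}>0$ then some $P_k$ appearing in the sum carries a $1$ at position $(i,j)$, and its permutation $\pi$ has $\pi(i)=j$ and $g_{l\pi(l)}\ge c_k>0$ for all $l$, the desired positive diagonal.

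\textbf{Sufficiency, set-up.} Assume $M_n$ has total support; then no row or column vanishes. The plan is a convex optimization after reducing to an indecomposable block. The connected components of the bipartite graph on the positive entries of $M_n$ split rows and columns into blocks $R_1\sqcup\dots\sqcup R_p$ and $C_1\sqcup\dots\sqcup C_p$; restricting a positive diagonal shows $|R_r|=|C_r|$, all entries outside the blocks vanish, and each diagonal block again has total support and a connected support graph. After permuting rows and columns (harmless for the statement) it thus suffices to scale one such block, still called $M_n$. Introduce the smooth convex function
\[
F(\Bu,\Bv)=\sum_{i,j}m_{ij}e^{u_i+v_j}-\sum_iu_i-\sum_jv_j,\qquad(\Bu,\Bv)\in\R^n\times\R^n,
\]
convex as a sum of nonnegative multiples of exponentials of linear forms minus a linear form. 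With $D_n=\diag(e^{u_i})$, $E_n=\diag(e^{v_j})$, the equations $\partial_{u_i}F=0$ and $\partial_{v_j}F=0$ say precisely that all row and column sums of $D_nM_nE_n$ equal $1$; by convexity any critical point is a global minimizer, so it suffices to produce one. As $F$ is invariant under $(\Bu,\Bv)\mapsto(\Bu+t\mathbf 1,\Bv-t\mathbf 1)$, I restrict to $H=\{\sum_iu_i=0\}$; a routine Lagrange-multiplier computation (the total entry sum of $D_nM_nE_n$ being forced to $n$) shows a minimizer of $F|_H$ is already an unconstrained critical point.

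\textbf{The main obstacle: coercivity.} The crux, and the step I expect to demand the most care, is that $F|_H$ is coercive, and that this is exactly where total support enters. Along a ray $(\Bu,\Bv)=s(\Ba,\Bb)$ with $\sum_ia_i=0$ and $(\Ba,\Bb)\neq0$, set $\mu=\max\{a_i+b_j:m_{ij}>0\}$. If $\mu>0$ then $s^{-1}F(s\Ba,s\Bb)\to+\infty$. If $\mu\le0$ then $s^{-1}F(s\Ba,s\Bb)\to-(\sum_ia_i+\sum_jb_j)=-\sum_jb_j$; summing $a_i+b_{\sigma(i)}\le0$ over a positive diagonal $\sigma$ gives $\sum_jb_j\le0$, so this limit is $\ge0$, and it vanishes only if $a_i+b_{\sigma(i)}=0$ for all $i$, in which case applying the total support property to each positive entry propagates $a_i+b_j=0$ across the whole connected support, forcing $(\Ba,\Bb)$ to be a multiple of $(\mathbf 1,-\mathbf 1)$ --- contrary to $\sum_ia_i=0$, $(\Ba,\Bb)\neq0$. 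Hence the recession function of $F|_H$ is strictly positive in every nonzero direction, $F|_H$ attains its minimum, and the corresponding $D_n,E_n$ give the Sinkhorn decomposition. (Read in reverse, the same computation shows a matrix without total support admits a ray on which $F|_H$ stays bounded and no doubly stochastic scaling, a Hall/K\"onig-type dichotomy.)

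\textbf{Uniqueness and continuity.} If $DME$ and $D'ME'$ are both doubly stochastic, set $G=DME$, $G'=D'ME'$, and write $g'_{ij}=e^{p_i+q_j}g_{ij}$ on their common support. Jensen's inequality for row $i$ of $G'$ gives $1=\sum_jg_{ij}e^{p_i+q_j}\ge\exp\!\bigl(p_i+\sum_jg_{ij}q_j\bigr)$; summing over $i$ yields $\sum_ip_i+\sum_jq_j\le0$, and the symmetric estimate with $G,G'$ interchanged gives the reverse, so equality holds throughout. Equality in Jensen makes $q_j$ constant on the support of each row, hence $p_i+q_j$ constant on each block's connected support, and matching row sums forces that constant to be $0$; therefore $G'=G$ (while $(D,E)$ is unique only up to the obvious rescalings). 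For continuity, let $M^{(k)}\to M$ through total-support matrices with scalings $G^{(k)}=D^{(k)}M^{(k)}E^{(k)}\in\Omega_n$; since $\Omega_n$ is compact it suffices to identify the subsequential limits. By \eqref{eqn:llp:sm} and \eqref{eqn:DGE}, $\bigl(\prod_id^{(k)}_i\bigr)\bigl(\prod_je^{(k)}_j\bigr)=\bigl(n\,\sm(M^{(k)})\bigr)^{-n}$, which lies between two positive constants because $\sm(M^{(k)})$ does --- at most $\sm(M)+o(1)$ by testing the infimum on an optimizer for $M$, and at least $\tfrac12\sm(M)>0$ since $m^{(k)}_{ij}\ge\tfrac12m_{ij}$ on the finite support of $M$ for large $k$. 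Taking products of $G^{(k)}_{l\pi(l)}=d^{(k)}_lm^{(k)}_{l\pi(l)}e^{(k)}_{\pi(l)}$ along positive diagonals $\pi$ then shows that any subsequential limit $G^*$ has exactly the support of $M$; moreover $d^{(k)}_ie^{(k)}_j\to G^*_{ij}/m_{ij}$ on that support, and these limits are consistent on each connected component, so $G^*=D^*ME^*$ with $D^*,E^*$ positive diagonal. By uniqueness $G^*=G(M)$, whence $G^{(k)}\to G(M)$.
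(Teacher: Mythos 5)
Your proof is essentially correct, but note that the paper does not prove this statement at all: it is quoted as a known theorem with citations to Brualdi--Parter--Schneider, Marshall--Olkin and Sinkhorn--Knopp, so there is no internal argument to compare against. What you have written is a self-contained proof along the classical convex-programming route: necessity via Birkhoff--von Neumann (every doubly stochastic matrix has total support), sufficiency by reducing to an indecomposable block and minimizing $F(\Bu,\Bv)=\sum_{i,j}m_{ij}e^{u_i+v_j}-\sum_iu_i-\sum_jv_j$, with total support entering exactly where you say it does --- in making the recession function of $F|_H$ strictly positive --- and uniqueness via Jensen applied to the ratio of two scalings. All of these steps check out, including the block decomposition (a positive diagonal restricted to a connected component of the support graph forces $|R_r|=|C_r|$) and the Lagrange-multiplier reduction on $H$. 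The one place where you are terser than the argument can afford is the claim that a subsequential limit $G^*$ has \emph{exactly} the support of $M$: products along positive diagonals of $M$ give only $\mathrm{supp}(M)\subseteq\mathrm{supp}(G^*)$. For the reverse inclusion you should add that for any permutation $\pi$ that is \emph{not} a positive diagonal of $M$ one has $\prod_l G^{(k)}_{l\pi(l)}=\bigl(\prod_ld^{(k)}_l\bigr)\bigl(\prod_le^{(k)}_l\bigr)\prod_lm^{(k)}_{l\pi(l)}\to 0$ (the prefactor being bounded), so no positive diagonal of $G^*$ can use an entry outside $\mathrm{supp}(M)$; since $G^*$ is doubly stochastic it has total support by your necessity argument, whence $\mathrm{supp}(G^*)\subseteq\mathrm{supp}(M)$. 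With that sentence added, the consistency construction of $D^*,E^*$ on each connected component and the appeal to uniqueness close the continuity argument. A last cosmetic point: the theorem as stated (and your sufficiency proof) tacitly excludes the all-zero matrix, for which the diagonal condition holds vacuously but no decomposition exists; this is an imprecision of the quoted statement rather than of your proof.
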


We refer the reader to \cite{BIP,FK,FLT,Men,S1} for the history and further developments of Theorem~\ref{theorem:Sinkhorn:matrix}, and also to \cite{LSW,Wig} for algorithmic aspects of this fundamental decomposition.

We put here together the relations between \eqref{eqn:llp:perm},\eqref{eqn:llp:perm:scaling}, \eqref{eqn:llp:sm} and \eqref{eqn:DGE}.

\begin{claim}\label{claim:llp:matrix} Assume that $\{M_n\}_{1\le n< \infty}$ is a sequence of matrices of non-negative entries with a Sinkhorn decomposition $M_n=D_n G_n E_n$. Then

$$1\le \lim_{n\to \infty} \frac{\per(M_n)}{\sm(M_n)}\le e.$$
\end{claim}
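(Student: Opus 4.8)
The plan is to use the Sinkhorn decomposition $M_n = D_n G_n E_n$ to reduce the ratio $\per(M_n)/\sm(M_n)$ to the corresponding ratio for the doubly stochastic factor $G_n$, and then to read the answer off from the van der Waerden bounds \eqref{eqn:doubly:W} together with Stirling's formula.

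First I would combine the two scaling identities. Writing $\Bd = (d_1,\dots,d_n)$ and $\Be = (e_1,\dots,e_n)$, identity \eqref{eqn:llp:perm:scaling} gives $\per(M_n) = \gm(\Bd)\,\per(G_n)\,\gm(\Be)$, while identity \eqref{eqn:DGE} gives $\sm(M_n) = \gm(\Bd)\,\sm(G_n)\,\gm(\Be)$. Since $G_n$ is doubly stochastic, applying \eqref{eqn:DGE} with $D_n = n I$ and $E_n = I$ yields $\sm(n G_n) = n\,\sm(G_n)$, and the left-hand side equals $1$ by \eqref{eqn:llp:sm}; hence $\sm(G_n) = 1/n > 0$. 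In particular $\sm(M_n) = \gm(\Bd)\gm(\Be)/n > 0$, and likewise $\perm(M_n) = \big(\prod_i d_i\big)\big(\prod_j e_j\big)\,\perm(G_n) > 0$ by the lower bound in \eqref{eqn:doubly:W}, so the ratio is well defined and
$$\frac{\per(M_n)}{\sm(M_n)} = \frac{\per(G_n)}{\sm(G_n)} = n\,\per(G_n) = n\left(\frac{\perm(G_n)}{n!}\right)^{1/n}.$$

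Next I would insert the van der Waerden bounds $n!/n^n \le \perm(G_n) \le 1$ for the doubly stochastic matrix $G_n$, the lower one being the Egorychev--Falikman theorem. This gives, for every $n$,
$$1 = n\left(\frac{n!/n^n}{n!}\right)^{1/n} \le \frac{\per(M_n)}{\sm(M_n)} \le n\left(\frac{1}{n!}\right)^{1/n} = \frac{n}{(n!)^{1/n}}.$$
By Stirling, $(n!)^{1/n} = (1+o(1))\,n/e$, so the right-hand side converges to $e$; together with the uniform lower bound $1$ this yields $1 \le \liminf_{n\to\infty}\per(M_n)/\sm(M_n)$ and $\limsup_{n\to\infty}\per(M_n)/\sm(M_n)\le e$, which is the assertion.

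I do not expect a genuine obstacle here: the proof is essentially bookkeeping with the scaling identities \eqref{eqn:llp:perm:scaling}, \eqref{eqn:DGE} and \eqref{eqn:llp:sm}, plus one invocation of van der Waerden's conjecture for the lower bound and Stirling's formula for the upper one. The only mild point worth noting is that the displayed ``limit'' in the statement should be read as the squeeze $1 \le \liminf \le \limsup \le e$, since for an arbitrary sequence $\{M_n\}$ there is no reason for $n\,\per(G_n)$ to converge.
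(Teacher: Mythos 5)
Your proof is correct and is exactly the argument the paper intends: the claim is presented there as the combination of the scaling identities \eqref{eqn:llp:perm:scaling}, \eqref{eqn:DGE}, the normalization \eqref{eqn:llp:sm}, and the van der Waerden bounds \eqref{eqn:doubly:W} with Stirling, which is precisely your reduction to $n\,\per(G_n)$. Your remark that the ``limit'' should be read as a $\liminf$/$\limsup$ squeeze is a fair and accurate clarification.
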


\subsection{Main result} One of the main goals of our note, and also of the mentioned paper  \cite{BIP} by Bochi, Iommi and Ponce, is to show that the limit in Claim \ref{claim:llp:matrix} is exactly one in the general context of ergodic theory. 

To prepare for the main statement, we still need to introduce the scaling limit for functions.  Fix a probability
space $(\Omega, \CA, \P)$. Let $\CG(\P)$ denote the set of positive
measurable functions $\varphi: \Omega \to \R_{>0}$ such that $\log \varphi
\in L_1(\P)$. The {\it geometric mean} of $\varphi$ is defined as 

$$\gm(\varphi) := \exp(\int \log \varphi d \P).$$

Let us also fix a pair of $\sigma$-algebras $\CA_1,\CA_2 \subset
\CA$. For $i=1,2$, we define 

$$\CG_i:=\Big\{ \varphi: \Omega \to \R_{>0}: \varphi
\mbox{ is $\CA_i$-measurable and $\log \varphi \in L^1(\P)$} \Big\}.$$ 

The {\it scaling limit} of a non-negative measurable function $f: \Omega \to \R$ with respect to $\CA_1,\CA_2$ is then defined as 

$$\sm_{\CA_1,\CA_2}(f):= \inf_{g_i\in \CG_i} \frac{1}{\gm(g_1)
  \gm(g_2)}\int g_1 f g_2 d\P.$$

Now we are ready to introduce the beautiful result by Bochi, Iommi and Ponce from \cite{BIP}. 

Assume that $(X,\CX, \mu)$ and $(Y,\CY,\nu)$ are Lebesgue probability
spaces, and $S: X \to X,\  \ T: Y\to Y$ are measure preserving
transformations. Given a function $f:X \times Y \to \R^+$, for each
$(x,y)\in X\times Y$ and for each integer $n$ we define the matrix $\Box_n f(x,y)$ to be

$$\Box_n f(x,y):=\big(f(S^ix,T^jy)\big)_{0\le i,j \le n-1}.$$

Let $\CA_1$ and $\CA_2$ be the sub-$\sigma$-algebras
formed by the $S$-invariant and the $T$-invariant sets
respectively. Let $\CB(\mu \times \nu)$ denote the set of positive measurable
functions on $X \times Y$ which are essentially bounded away from zero and
infinity.

\begin{theorem}[Law of large permanent]\cite[Theorem 4.1]{BIP}\label{theorem:llp:BIP} If $S$ and $T$ are ergodic and
  $f\in \CB(\mu \times \nu)$ then for $\mu\times \nu$-almost every $(x,y)\in X \times Y$

$$\lim_{n\to \infty} \frac{\per(\Box_n f(x,y))}{\sm_{\CA_1,\CA_2}(f)}=1.$$
 \end{theorem}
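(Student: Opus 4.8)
\emph{Strategy.} I would combine the van der Waerden bound packaged in Claim~\ref{claim:llp:matrix} with Sinkhorn scaling and pointwise ergodic theorems; the one genuinely new ingredient is a sharp estimate for the permanent of a doubly stochastic matrix with $O(1/n)$ entries (the Lemma below), which is what recovers the factor $e$ that Claim~\ref{claim:llp:matrix} leaves open. To set up, fix $0<a\le f\le b<\infty$, put $K=b/a$, and fix a point $(x,y)$ outside a suitable $\mu\times\nu$-null set; write $A_n:=\Box_n f(x,y)$, so $a_{ij}:=f(S^ix,T^jy)\in[a,b]$. Since $A_n$ has positive entries, Theorem~\ref{theorem:Sinkhorn:matrix} gives a Sinkhorn decomposition $A_n=D_nG_nE_n$; reading off the fixed--point identities $d_i=\sum_j a_{ij}/e_j$, $e_j=\sum_i a_{ij}/d_i$ and using $a\le a_{ij}\le b$, the $d_i$ are pairwise within a factor $K$, likewise the $e_j$, so (since each row of $G_n$ sums to $1$) every entry of $G_n$ lies in $[K^{-2}/n,\,K^{2}/n]$. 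By \eqref{eqn:llp:perm:scaling}, \eqref{eqn:DGE} and \eqref{eqn:llp:sm},
$$\frac{\per(A_n)}{\sm(A_n)}=\frac{\per(G_n)}{\sm(G_n)}=n\,\per(G_n)=n\Big(\frac{\perm(G_n)}{n!}\Big)^{1/n}\in\Big[1,\ n(1/n!)^{1/n}\Big],$$
and $n(1/n!)^{1/n}\to e$.

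The first real step is to upgrade the last display to $\per(A_n)/\sm(A_n)\to1$, via: \textbf{Lemma.} \emph{If $(H_n)$ is a sequence of $n\times n$ doubly stochastic matrices with $c/n\le(H_n)_{ij}\le C/n$ for fixed $0<c\le C$, then $\perm(H_n)=\frac{n!}{n^n}e^{o(n)}$, equivalently $n\,\per(H_n)\to1$.} The lower bound is van der Waerden; the content is the matching upper bound $\perm(H_n)\le\frac{n!}{n^n}e^{o(n)}$, and I expect proving it to be the main obstacle. My plan for it is a Bregman--Minc type argument: round the entries of $H_n$ to a common denominator of size $O(1)$ (the lower bound $c/n$ makes the relative rounding error $o(1)$, hence absorbable into an $e^{o(n)}$ factor), reducing to a sharp upper bound on the permanent of a non--negative integer matrix with $O(1)$ entries and row sums $O(n)$; the heuristic is that a doubly stochastic matrix of $\ell^\infty$-norm $O(1/n)$ is ``spread out'' enough that its permanent equals that of $J_n$ to exponential order. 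Applying the Lemma to $G_n$ (with $c=K^{-2}$, $C=K^{2}$) gives $\per(A_n)=(1+o(1))\,\sm(A_n)$.

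It then remains to show $\sm(A_n)\to\sm_{\CA_1,\CA_2}(f)$ for a.e.\ $(x,y)$. Here I would use that the $\Z^2$-action generated by $S$ and $T$ on $(X\times Y,\mu\times\nu)$ is ergodic (an invariant set must be of the form $X\times B$ with $B$ a $T$-invariant set, hence trivial), so the pointwise ergodic theorem for this $\Z^2$-action applies to averages over $\{0,\dots,n-1\}^2$. For $\limsup_n\sm(A_n)\le\sm_{\CA_1,\CA_2}(f)$: test the infimum defining $\sm(A_n)$ with $x_i=g_2(S^ix)$, $y_j=g_1(T^jy)$ for fixed $g_1\in\CG_1$, $g_2\in\CG_2$, let $n\to\infty$ using this ergodic theorem for $g_1fg_2$ and Birkhoff for the two geometric means, and then take the infimum over $g_1,g_2$. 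For $\liminf_n\sm(A_n)\ge\sm_{\CA_1,\CA_2}(f)$: fix a bounded minimizer $(g_1^*,g_2^*)$ of $\sm_{\CA_1,\CA_2}(f)$ (its existence, boundedness, and the continuous Sinkhorn identities $g_2^*(x)\int f(x,\cdot)g_1^*\,d\nu\equiv I\equiv g_1^*(y)\int f(\cdot,y)g_2^*\,d\mu$, with $I=\int g_1^*fg_2^*\,d(\mu\times\nu)=\gm(g_1^*)\gm(g_2^*)\,\sm_{\CA_1,\CA_2}(f)$, being standard variational facts), and set $\tilde A_n:=\diag(g_2^*(S^ix))\,A_n\,\diag(g_1^*(T^jy))$. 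The Sinkhorn identities together with the ergodic theorem (applied, uniformly over the $n$ orbit indices, to $y\mapsto f(S^ix,y)g_1^*(y)$ and $x\mapsto f(x,T^jy)g_2^*(x)$) force every row and column sum of $\tilde A_n$ to equal $nI(1+o(1))$; sandwiching $\tilde A_n$ between an $nI(1-o(1))$-doubly stochastic matrix and an $nI(1+o(1))$-doubly stochastic one (possible since its entries are bounded away from $0$) and using monotonicity and homogeneity of $\sm$ together with $\sm(nG)=1$ for doubly stochastic $G$, one gets $\sm(\tilde A_n)\to I$. Since $\sm(\tilde A_n)=\gm((g_2^*(S^ix))_i)\,\sm(A_n)\,\gm((g_1^*(T^jy))_j)$ by \eqref{eqn:DGE} and the geometric means tend to $\gm(g_2^*)$, $\gm(g_1^*)$ by Birkhoff, this yields $\sm(A_n)\to I/(\gm(g_1^*)\gm(g_2^*))=\sm_{\CA_1,\CA_2}(f)$, and combining with the previous step, $\per(A_n)\to\sm_{\CA_1,\CA_2}(f)$.

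Besides the Lemma, the delicate points are: (i) running the ergodic theorem for the row/column sums of $\tilde A_n$ uniformly over an orbit segment of length $n$, which I would handle with a maximal inequality (or by invoking the $\Z^2$-ergodic theorem over squares directly); and (ii) the variational existence of the bounded Sinkhorn minimizer $(g_1^*,g_2^*)$. Both are routine next to the permanent estimate, which is the heart of the matter.
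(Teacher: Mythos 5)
Your architecture is genuinely different from the paper's, and closer to the original Bochi--Iommi--Ponce route: you apply the \emph{matrix} Sinkhorn decomposition to $\Box_nf(x,y)$ for each $n$, reduce to $n\,\per(G_n)\to 1$ for the doubly stochastic factors, and then separately prove $\sm(\Box_nf)\to\sm_{\CA_1,\CA_2}(f)$. The paper instead performs the Sinkhorn decomposition once, at the level of the \emph{function} $f=\varphi g\psi$ (Theorem \ref{theorem:Sinkhorn:function}), pulls the factors $\varphi,\psi$ out of the permanental mean by Birkhoff, and is thereby reduced to showing $\per(\Box_n g(\omega))\to 1$ for doubly stochastic $g$; it never needs the convergence of matrix scaling means at all. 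Your route is viable in principle but buys you an extra nontrivial step (the whole third paragraph of your proposal), and within that step the assertion that the ergodic theorem ``forces \emph{every} row and column sum of $\tilde A_n$ to equal $nI(1+o(1))$'' is too strong: the $n$ row sums are Birkhoff averages started at the $n$ distinct points $S^ix$, and pointwise convergence is not uniform over these; a maximal inequality controls the supremum over $n$ for a fixed starting point, not the supremum over starting points. The honest conclusion (and what the paper proves in Lemma \ref{lemma:approx:doubly} via the $L^1$ ergodic theorem plus Markov) is that all but $o(n)$ of the row and column sums are right, after which one must replace your entrywise sandwich by an $L^1$-perturbation argument (as in Lemmas \ref{lemma:L1:doubly} and \ref{lemma:L1}). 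This is fixable but is a real repair.

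The genuine gap is your central Lemma, which you correctly identify as the heart of the matter but do not prove: it is precisely \cite[Conjecture 6.2]{BIP}, i.e.\ Theorem \ref{theorem:doubly} here, and it is the main new content of the paper. Your sketch --- round the entries to a common denominator and apply a ``Bregman--Minc type'' bound to the resulting non-negative integer matrix --- does not go through as stated, because Bregman--Minc is a theorem about $\{0,1\}$ matrices and its naive extension $\perm(M)\le\prod_i(r_i!)^{1/r_i}$ to integer matrices is already false for the $1\times 1$ matrix $(2)$; the correct integer-matrix generalizations (Soules) are exactly the nontrivial point you are trying to establish. The paper's proof avoids this by an extremal argument (Lemma \ref{lemma:shifting}): among row-stochastic matrices with entries at most $\lambda$ that maximize the permanent, there is one in which every row has all entries in $\{0,\lambda\}$ except possibly one; bumping the exceptional entries up to $\lambda$ and rescaling produces a genuine $\{0,1\}$ matrix with constant row sums $\lceil n/\lambda\rceil$, to which Bregman--Minc applies directly and yields $\perm(A_n)\le e^{2\lambda}n^{(\lambda-1)/2}\,n!$, which is all that is needed. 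Until you supply a proof of your Lemma at this level of precision, the proposal does not constitute a proof.
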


In particular, Theorem \ref{theorem:llp:BIP} not only shows the  $(\mu \times \nu)$-a.~e.~existence of the limit of
the permanental mean of $\Box_n f(x,y)$, but it also indicates that this limit is precisely the scaling mean of $f$,  therefore the result connects the limit to an optimization problem. 

We also invite the reader to \cite[Section 5]{BIP} for applications to Muirhead means as well as to a classical result of Hal\'asz and Sz\'ekely \cite{HSz}.

We now introduce a generalization of Theorem \ref{theorem:llp:BIP}. Suppose that $T$ is an ergodic measure-preserving action on the semigroup $\BN^2$ on a Lebesgue
probability space $(\Omega, \CA, \P)$. Given a function $f:\Omega \to
\R^+$, define the matrix $\Box_nf(\omega)$ to be

$$\Box_nf(\omega): = \big(f(T^{(i,j)}(\omega) )\big)_{0\le
  i,j\le n-1}.$$

 Let $\CA_1$ and $\CA_2$ be the sub-$\sigma$-algebras
formed by the $T^{(1,0)}$-invariant and the $T^{(0,1)}$-invariant sets
respectively. The following was conjectured in \ref{theorem:llp:BIP}.

\begin{conjecture}[Law of large permanent, another version]\label{conj:llp} If $\log f \in L_\infty (\P)$ then for $\P$-almost every $\omega$ 

$$\lim_{n\to \infty} \frac{\per(\Box_nf(\omega))}{
\sm_{\CA_1,\CA_2}(f)}=1.$$
\end{conjecture}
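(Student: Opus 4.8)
The plan is to prove upper and lower matching bounds for $\limsup$ and $\liminf$ of $\per(\Box_n f(\omega))/\sm_{\CA_1,\CA_2}(f)$ separately, exactly as in the matrix-level Claim~\ref{claim:llp:matrix} but now in the ergodic-theoretic setting. First I would reduce to the doubly-stochastic-like normalization. Since $\log f \in L_\infty(\P)$, the function $f$ is bounded away from $0$ and $\infty$, so each finite section matrix $\Box_n f(\omega)$ has strictly positive entries and in particular admits a Sinkhorn decomposition $\Box_n f(\omega) = D_n G_n E_n$ by Theorem~\ref{theorem:Sinkhorn:matrix}. By the scaling-homogeneity identities \eqref{eqn:llp:perm:scaling} and \eqref{eqn:DGE}, the ratio $\per(\Box_n f(\omega))/\sm(\Box_n f(\omega))$ equals $\per(G_n)/\sm(G_n)$, and by \eqref{eqn:llp:sm} we have $\sm(nG_n)=1$, so Claim~\ref{claim:llp:matrix} gives $1 \le \liminf \per(\Box_n f)/\sm(\Box_n f) \le \limsup \per(\Box_n f)/\sm(\Box_n f) \le e$ for \emph{every} $\omega$. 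Thus the entire content of the conjecture is to show that $\sm(\Box_n f(\omega)) \to \sm_{\CA_1,\CA_2}(f)$ for $\P$-almost every $\omega$, together with the sharper statement that the permanental-mean-to-scaling-mean ratio actually tends to $1$ (not merely stays in $[1,e]$) in this structured regime.

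The core of the argument is therefore the almost-everywhere convergence $\sm(\Box_n f(\omega)) \to \sm_{\CA_1,\CA_2}(f)$, and here the natural tool is the pointwise ergodic theorem applied to the $\BN^2$-action $T$. For the upper bound $\limsup_n \sm(\Box_n f(\omega)) \le \sm_{\CA_1,\CA_2}(f)$: given $\eps>0$, pick near-optimal $g_1 \in \CG_1$, $g_2 \in \CG_2$ in the definition of $\sm_{\CA_1,\CA_2}(f)$; one may truncate them to assume $\log g_i \in L_\infty$. Feed the vectors $\Bx = (g_1(T^{(i,0)}\omega))_{0\le i\le n-1}$ and $\By = (g_2(T^{(0,j)}\omega))_{0\le j \le n-1}$ into the finite-dimensional infimum defining $\sm(\Box_n f(\omega))$. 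Then $\frac{1}{n^2}\Bx^T \Box_n f(\omega) \By = \frac{1}{n^2}\sum_{i,j} g_1(T^{(i,0)}\omega) f(T^{(i,j)}\omega) g_2(T^{(0,j)}\omega)$, which by the two-dimensional pointwise ergodic theorem converges a.e.\ to $\int g_1 f g_2 \, d\P$ up to the $\CA_1$- and $\CA_2$-conditional adjustments — and crucially $\gm(\Bx) = \exp(\frac1n \sum_i \log g_1(T^{(i,0)}\omega)) \to \gm(g_1)$ by the one-dimensional ergodic theorem along the $T^{(1,0)}$-orbit (using $\CA_1$-measurability/ergodicity of $g_1$), similarly for $\gm(\By)$. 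This yields $\limsup_n \sm(\Box_n f(\omega)) \le \frac{1}{\gm(g_1)\gm(g_2)}\int g_1 f g_2\, d\P \le \sm_{\CA_1,\CA_2}(f) + \eps$. The matching lower bound $\liminf_n \sm(\Box_n f(\omega)) \ge \sm_{\CA_1,\CA_2}(f)$ is the harder direction: one takes, for each $n$, the near-optimal vectors $\Bx^{(n)}, \By^{(n)}$ achieving $\sm(\Box_n f(\omega))$ and must build from them honest functions $g_1 \in \CG_1, g_2 \in \CG_2$; the difficulty is that $\Bx^{(n)}$ need not come from evaluating a fixed function along an orbit, and the optimizers can degenerate (entries tending to $0$ or $\infty$). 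The standard remedy, following the structure in \cite{BIP}, is a compactness/limiting argument: normalize so $\gm(\Bx^{(n)}) = \gm(\By^{(n)}) = 1$, use the a.e.\ lower bound on entries of $\Box_n f$ together with convexity of the objective in $\log \Bx, \log \By$ to get uniform control, and extract a limiting pair of functions via a diagonal/weak-compactness procedure, invoking the ergodic theorem in reverse to identify the limit with an admissible $(g_1,g_2)$.

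Once $\sm(\Box_n f(\omega)) \to \sm_{\CA_1,\CA_2}(f)$ a.e., the final step upgrades the $[1,e]$ sandwich to an equality by showing $\per(\Box_n f(\omega)) \to \sm_{\CA_1,\CA_2}(f)$ as well. Here I would pass through the Sinkhorn-normalized matrices $G_n = G_n(\omega)$ and use the continuity statement in Theorem~\ref{theorem:Sinkhorn:matrix} together with an equidistribution/uniform-mixing consequence of ergodicity of $T$: because $f$ is bounded above and below, the normalized matrices $n G_n$ are "sufficiently spread out" so that no $O(\log n)$ collection of near-permutation rows can dominate, and one applies the van der Waerden lower bound \eqref{eqn:doubly:W} on the one side and the refined upper bound from \cite[Theorem A.1]{AN} on the other, exactly as in the proof of \cite[Theorem 4.1]{BIP}, to force $\per(n G_n) \to 1$. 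The main obstacle I anticipate is the lower bound $\liminf_n \sm(\Box_n f(\omega)) \ge \sm_{\CA_1,\CA_2}(f)$: controlling the potentially degenerate finite-$n$ optimizers and extracting from them a genuine pair $(g_1, g_2)$ of $\CA_i$-measurable functions with the right geometric means is where the real analytic work lies, and it is the step most sensitive to replacing the product-system setup of Theorem~\ref{theorem:llp:BIP} with a general ergodic $\BN^2$-action.
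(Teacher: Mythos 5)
There are two genuine gaps. The first is in your route through the a.e.\ convergence $\sm(\Box_n f(\omega)) \to \sm_{\CA_1,\CA_2}(f)$: the $\liminf$ direction, which you correctly flag as the hard part, is not actually carried out --- ``extract a limiting pair of functions via a diagonal/weak-compactness procedure'' is the entire difficulty, since the finite-$n$ optimizers are arbitrary positive vectors with no a priori relation to any $\CA_i$-measurable function, and nothing in your sketch controls them. The paper avoids this detour entirely: it applies the \emph{functional} Sinkhorn decomposition (Theorem \ref{theorem:Sinkhorn:function}, from \cite{BIP}) to write $f = \varphi g \psi$ with $\varphi\in\CB_1$, $\psi\in\CB_2$ and $g$ doubly stochastic with respect to $\CA_1,\CA_2$; homogeneity of $\per$ and $\sm$ plus the one-dimensional ergodic theorem applied to $\log\varphi$ and $\log\psi$ along orbits then reduce the whole conjecture to showing $\per(\Box_n g(\omega))\to 1$ for doubly stochastic $g$, with no need to compare $\sm(\Box_n f(\omega))$ to $\sm_{\CA_1,\CA_2}(f)$ at all. (Incidentally, your test vector $\Bx=(g_1(T^{(i,0)}\omega))_{i}$ with $g_1\in\CG_1$ is a constant vector, since $\CA_1$-measurability means $T^{(1,0)}$-invariance; the row and column roles must be swapped for that computation to say anything.)

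The second and fatal gap is the upper bound in the last step. Even granting everything before it, you must show $\per(nG_n)\le 1+o(1)$ for doubly stochastic matrices whose entries are comparable to $1/n$; this is exactly \cite[Conjecture 6.2]{BIP} (Theorem \ref{theorem:doubly} here), and it is the new content of the paper. Your proposed tools cannot deliver it: the van der Waerden bound \eqref{eqn:doubly:W} gives only the lower bound, and the stability result \cite[Theorem A.1]{AN}, used in contrapositive form on a matrix with no entry near $0.9$, yields only $\perm(M_n)\le n^{-O(1)}$, whereas what is needed is $\perm(M_n)\le (1+o(1))^n\, n!/n^n$, a bound of order $e^{-n}$ --- exponentially stronger. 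The paper proves this (Theorem \ref{theorem:stochastic}) by an extremal ``shifting'' argument that pushes a permanent-maximizer to a $\{0,\lambda\}$-valued matrix and then applies the Bregman--Minc inequality. Moreover, rather than invoking continuity of the Sinkhorn map on the random matrices, the paper uses the $L_1$ ergodic theorem to show that most row and column sums of $\Box_n g(\omega)$ are $(1\pm\eps)n$, approximates $\Box_n g(\omega)$ in $L_1$ by a genuine element of $n\cdot\Omega_n$ with entries in $[(2\lambda)^{-1},2\lambda]$, and transfers the permanent estimate via \cite[Lemma 4.4]{BIP}. Without a substitute for Theorem \ref{theorem:stochastic}, your argument cannot close.
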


Note that this conjecture would imply Theorem \ref{theorem:llp:BIP} with $S=T^{(1,0)}$ and $T=T^{(0,1)}$. Our result confirms this conjecture.

\begin{theorem}[Main result]\label{theorem:llp}
  Conjecture \ref{conj:llp} holds.
\end{theorem}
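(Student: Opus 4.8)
The plan is to show that, along any sequence $\Box_nf(\omega)$ arising from the stated setup, the permanental mean and the scaling mean of the matrix are asymptotically equal, and then to identify $\lim_n\sm(\Box_nf(\omega))$ with $\sm_{\CA_1,\CA_2}(f)$ by ergodic‑theoretic means. Since $\log f\in L_\infty(\P)$, fix $0<\alpha\le\beta$ with $\alpha\le f\le\beta$ a.e.; then each $\Box_nf(\omega)$ has strictly positive entries, so by Theorem~\ref{theorem:Sinkhorn:matrix} it has a Sinkhorn decomposition $\Box_nf(\omega)=D_nG_nE_n$ with $G_n=(g_{ij})$ doubly stochastic. Reading off the row and column sums of $G_n$ (namely $d_i=\sum_jM_{ij}/e_j$ and $e_j=\sum_iM_{ij}/d_i$, where $M_{ij}$ are the entries of $\Box_nf(\omega)$) gives $d_i/d_{i'}\in[\alpha/\beta,\beta/\alpha]$ and likewise for the $e_j$'s, whence $ng_{ij}=nM_{ij}/(d_ie_j)\le(\beta/\alpha)^2=:C$ uniformly in $i,j,n,\omega$. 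Using \eqref{eqn:llp:perm:scaling}, \eqref{eqn:DGE}, the homogeneity of $\per$ and $\sm$, and $\sm(nG_n)=1$ (cf.\ \eqref{eqn:llp:sm}), one gets $\per(\Box_nf(\omega))=\sm(\Box_nf(\omega))\cdot\per(nG_n)$ with $\per(nG_n)=(\perm(nG_n)/n!)^{1/n}$. The van der Waerden bound gives $\per(nG_n)\ge1$; for the matching upper bound, a Bregman‑type inequality for permanents of matrices with entries in $[0,1]$ applied to $\tfrac1C(nG_n)$ (whose rows all sum to $n/C$) yields $\perm(nG_n)\le C^{\,n}\bigl((n/C)!\bigr)^{C}$, which is $n!$ up to a polynomial factor by Stirling; hence $\per(nG_n)\to1$. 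Consequently $\per(\Box_nf(\omega))=(1+o(1))\,\sm(\Box_nf(\omega))$, and the theorem reduces to proving $\sm(\Box_nf(\omega))\to\sm_{\CA_1,\CA_2}(f)$ for $\P$-a.e.\ $\omega$.

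Write $S=T^{(1,0)}$ and $R=T^{(0,1)}$; these commute because $\BN^2$ is abelian, and $\CA_1=\CI_S$, $\CA_2=\CI_R$ are their invariant $\sigma$-algebras, with $\CI_S\cap\CI_R$ trivial since the joint action is ergodic. The basic ergodic input is: if $g$ is $\CI_R$-measurable with $\log g\in L^1(\P)$, then $\frac1n\sum_{i=0}^{n-1}\log g(S^i\omega)\to\E[\log g]$ for a.e.\ $\omega$. Indeed $g$ descends to the factor $\Omega/\CI_R$, on which $S$ induces a transformation whose invariant sets are precisely $\CI_S\cap\CI_R$, hence an \emph{ergodic} one, so Birkhoff's theorem on $\Omega/\CI_R$ applies; equivalently, $\gm\bigl((g(S^i\omega))_{0\le i<n}\bigr)\to\gm(g)$. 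For the easy half $\limsup_n\sm(\Box_nf(\omega))\le\sm_{\CA_1,\CA_2}(f)$, fix admissible test functions $g_1\in\CG_1$, $g_2\in\CG_2$ (after truncation, bounded and bounded away from $0$) and test $\sm(\Box_nf(\omega))$ against the orbit scalings $\Bx=(g_2(S^i\omega))_i$ and $\By=(g_1(R^j\omega))_j$. Because $g_2$ is $R$-invariant and $g_1$ is $S$-invariant, $\Bx^T\,\Box_nf(\omega)\,\By=\sum_{0\le i,j<n}(g_1g_2f)(S^iR^j\omega)$, so $\tfrac1{n^2}\Bx^T\Box_nf(\omega)\By\to\int g_1fg_2\,d\P$ by the pointwise ergodic theorem for the $\BN^2$-action, while $\gm(\Bx)\to\gm(g_2)$ and $\gm(\By)\to\gm(g_1)$ by the above. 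Running through a countable family of test pairs dense enough to compute $\sm_{\CA_1,\CA_2}(f)$ then gives the claim for a.e.\ $\omega$.

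The reverse inequality $\liminf_n\sm(\Box_nf(\omega))\ge\sm_{\CA_1,\CA_2}(f)$ is the heart of the matter: one must show that the optimal scalings of the finite matrices, namely the Sinkhorn scalings $\Bd^{(n)},\Be^{(n)}$ (which realize $\sm(\Box_nf(\omega))=\tfrac1n\gm(\Bd^{(n)})\gm(\Be^{(n)})$ and whose entries, after normalization, lie in a fixed compact subset of $\R_{>0}$ depending only on $\beta/\alpha$), cannot asymptotically beat admissible $g_1\in\CG_1$, $g_2\in\CG_2$. Following the scheme of \cite[\S4]{BIP}, I would argue by contradiction from a subsequence with $\sm(\Box_{n_k}f(\omega))\le\sm_{\CA_1,\CA_2}(f)-\delta$: using the uniform two‑sided bounds on the entries of $\Box_nf(\omega)$, produce by a compactness argument along the $S$- and $R$-orbits of $\omega$ limiting scaling functions $\hat g_1,\hat g_2$ with $\int\hat g_1f\hat g_2/(\gm\hat g_1\,\gm\hat g_2)\le\sm_{\CA_1,\CA_2}(f)-\delta$, contradicting the definition of $\sm_{\CA_1,\CA_2}(f)$ — provided one verifies that $\hat g_2$ is $\CA_2$-measurable and $\hat g_1$ is $\CA_1$-measurable. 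This measurability step is precisely where the passage from the product system of \cite[Theorem~4.1]{BIP} to a general ergodic $\BN^2$-action demands new work: in the product case the row scalings depend only on the first coordinate and hence are $\CA_2$-measurable by fiat, whereas for a general commuting pair $(S,R)$ one must instead extract this from the fact that $\Bd^{(n)}$ involves no column index, together with $\frac1n\sum_{j<n}\phi(R^j\omega)\to\E[\phi\mid\CI_R](\omega)$, while also coping with the non‑invertibility of $S$ and $R$. This is the main obstacle; the permanent inequality of the first paragraph and the easy direction are insensitive to the generality of the action and transfer from \cite{BIP} essentially unchanged. Combining the two directions yields $\sm(\Box_nf(\omega))\to\sm_{\CA_1,\CA_2}(f)$, and hence, by the first paragraph, $\per(\Box_nf(\omega))\to\sm_{\CA_1,\CA_2}(f)$ for $\P$-a.e.\ $\omega$.
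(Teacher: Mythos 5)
Your proposal has a genuine gap, and you have in fact located it yourself: the reverse inequality $\liminf_n\sm(\Box_nf(\omega))\ge\sm_{\CA_1,\CA_2}(f)$ is left as ``the main obstacle,'' with only a sketch of a compactness/contradiction argument whose crucial step --- showing that the limiting row scalings $\hat g_1$ are $\CA_1$-measurable and the column scalings $\hat g_2$ are $\CA_2$-measurable for a general (non-product, non-invertible) $\BN^2$-action --- is exactly the point where the argument of \cite[\S 4]{BIP} does not transfer. Since your entire strategy routes the theorem through the convergence $\sm(\Box_nf(\omega))\to\sm_{\CA_1,\CA_2}(f)$, and this is the half you do not prove, the proposal does not constitute a proof. (The first paragraph, reducing the problem to this convergence via matrix-level Sinkhorn scaling and a permanent upper bound, and the easy direction via orbit test functions, are both sound; note though that the inequality you invoke for matrices with entries in $[0,1]$ is not the classical Bregman--Minc bound but Soules' extension of it, which should be cited explicitly.)

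The paper sidesteps this obstacle entirely by performing the Sinkhorn decomposition at the level of \emph{functions} rather than matrices: by \cite[Theorem 3.6]{BIP} one writes $f=\varphi\,g\,\psi$ with $\varphi\in\CB_1$, $\psi\in\CB_2$ and $g$ doubly stochastic with respect to $\CA_1,\CA_2$, so the measurability of the scaling factors is built in from the start. The ergodic theorem along rows and columns then factors $\per(\Box_nf(\omega))$ as $(\gm(\varphi)+o(1))(\gm(\psi)+o(1))\per(\Box_ng(\omega))$, and homogeneity of $\sm_{\CA_1,\CA_2}$ reduces everything to showing $\per(\Box_ng(\omega))\to1$ for doubly stochastic $g$. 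That statement is then proved by purely finite-dimensional means: Birkhoff's $L^1$ ergodic theorem shows most row and column sums of $\Box_ng(\omega)$ are $(1\pm\eps)n$, an explicit combinatorial massaging produces a genuine matrix in $n\cdot\Omega_n$ with bounded entries at small $L^1$-distance, \cite[Lemma 4.4]{BIP} transfers permanental means across $L^1$-perturbations, and an extremal argument plus Bregman--Minc bounds the permanent of bounded-entry doubly stochastic matrices. If you want to salvage your route, you would need to supply the measurability argument you flag; the cleaner fix is to import the functional Sinkhorn decomposition as the paper does.
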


The rest of the note is devoted  to prove Theorem \ref{theorem:llp}. Although we will use some important ingredients  from the paper \cite{BIP} by Bochi, Iommi and Ponce, our key approach is  quite different from theirs. Roughly speaking, the proof consists of three steps: (1) reduction to doubly stochastic functions, (2) passing to doubly stochastic matrices of bounded entries, (3) establishing upper bound for permanents of such matrices.

\section{Proof of Theorem \ref{theorem:llp}: passing to doubly stochastic functions}

We  record here other key properties of the scaling mean,
these are functional analogues of the results introduced in Section
\ref{section:intro}.

\begin{theorem}\label{theorem:function} Fix a probability space $(\Omega,\CA,\P)$ and a pair of $\sigma$-algebras $\CA_1,\CA_2 \subset
\CA$. The following holds.

\begin{itemize} 
\item (Homogeneity) If $\varphi \in \CG_1$ and $\psi \in \CG_2$ then

\begin{equation}\label{eqn:homogeneity}
\sm_{\CA_1,\CA_2}(\varphi g \psi) = \gm(\varphi) \sm_{\CA_1,\CA_2}(g) \gm(\psi).
\end{equation}
\vskip .1in
\item(Restriction to $L_\infty$) Let 

$$\CB_i:=\Big\{ h: \Omega \to \R^+: h
\mbox{ is $\CA_i$-measurable and $\log h \in L_\infty(\P)$}
\Big \}.$$ 

Then

\begin{equation}\label{eqn:infty}
\sm_{\CA_1,\CA_2}(f)= \inf_{g_i\in \CB_i} \frac{1}{\gm(g_1)
  \gm(g_2)}\int g_1 f g_2 d\P.
\end{equation}
\vskip .1in
\item (Doubly stochastic) If an integrable non-negative function $g$,
  $g:\Omega \to \R_{\ge 0}$, is doubly stochastic with respect to
  $\CA_1$ and $\CA_2$, that is

$$\E (g|\CA_1) = \E (g|\CA_2) =1,\quad  \P-\mbox{almost everywhere},$$ 
then we have the following analogue of \eqref{eqn:llp:sm}

\begin{equation}\label{eqn:doubly}
\sm_{\CA_1,\CA_2}(g)=1.
\end{equation}
\end{itemize}
\end{theorem}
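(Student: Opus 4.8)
The plan is to prove the three items in the order stated, since each builds on the previous, and to reduce everything to the definition of $\sm_{\CA_1,\CA_2}$ as an infimum over pairs $(g_1,g_2)\in\CG_1\times\CG_2$. For \emph{homogeneity} \eqref{eqn:homogeneity}, the key observation is that multiplication by $\varphi\in\CG_1$ induces a bijection of $\CG_1$ onto itself, $g_1\mapsto g_1/\varphi$, and similarly for $\varphi^{-1}$; since $\varphi$ is $\CA_1$-measurable, one has $\gm(\varphi g_1)=\gm(\varphi)\gm(g_1)$ by additivity of $\int\log(\cdot)\,d\P$ and finiteness of $\int\log\varphi\,d\P$. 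Substituting $g_1'=\varphi g_1$, $g_2'=\psi g_2$ into the defining infimum for $\sm_{\CA_1,\CA_2}(\varphi g\psi)$ and using $\gm(\varphi g\psi\cdot g_1'g_2'^{-1}\cdots)$ — more carefully, writing $\int g_1(\varphi g\psi)g_2\,d\P = \int(\varphi g_1) g (\psi g_2)\,d\P$ and $\gm(g_1)\gm(g_2)=\gm(\varphi g_1)\gm(\psi g_2)/(\gm(\varphi)\gm(\psi))$ — shows the two infima differ exactly by the factor $\gm(\varphi)\gm(\psi)$. I expect this step to be essentially bookkeeping; the only mild point is checking $\log\varphi, \log\psi\in L^1$ guarantees all the $\gm$'s are finite and the bijection preserves $\CG_i$.

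For the \emph{restriction to $L_\infty$} \eqref{eqn:infty}, the inequality $\ge$ is immediate since $\CB_i\subset\CG_i$. For $\le$, I would take any $g_i\in\CG_i$ and approximate it by truncations $g_i^{(N)}:=\min(\max(g_i,1/N),N)\in\CB_i$. As $N\to\infty$, $\log g_i^{(N)}\to\log g_i$ pointwise and, by an integrable-majorant argument ($|\log g_i^{(N)}|\le|\log g_i|$), $\gm(g_i^{(N)})\to\gm(g_i)$ by dominated convergence. Likewise $\int g_1^{(N)} f g_2^{(N)}\,d\P\to\int g_1 f g_2\,d\P$ — here one should use that $f\in\CB(\mu\times\nu)$ or at least is bounded so that the truncations from below and above of $g_1,g_2$ keep the integrand controlled; monotone/dominated convergence in two stages (first in $g_1^{(N)}$, then in $g_2^{(N)}$) closes this. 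Hence the $\CB_i$-infimum is $\le$ each term of the $\CG_i$-infimum, giving equality.

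For the \emph{doubly stochastic} case \eqref{eqn:doubly}, the upper bound $\sm_{\CA_1,\CA_2}(g)\le 1$ follows by taking $g_1=g_2\equiv 1$: then $\gm(1)=1$ and $\int 1\cdot g\cdot 1\,d\P = \E(\E(g\mid\CA_1))=1$. For the lower bound, fix $g_i\in\CG_i$ (or, by \eqref{eqn:infty}, $g_i\in\CB_i$) and estimate $\int g_1 g g_2\,d\P$ from below by Jensen/AM--GM: conditioning on $\CA_1$, write $\int g_1 g g_2\,d\P = \int g_1 \,\E(g g_2\mid\CA_1)\,d\P$, and one would like to push the geometric mean inside. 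The cleanest route is to use the tower property together with the conditional AM--GM (Jensen for the concave $\log$): since $g_1$ is $\CA_1$-measurable, $\E(g_1 g g_2\mid\CA_1)=g_1\,\E(g g_2\mid\CA_1)$, and iterating with $\CA_2$ plus $\E(g\mid\CA_i)=1$ should yield $\int g_1 g g_2\,d\P\ge \gm(g_1)\gm(g_2)$; concretely, by Jensen $\log\int g_1 g g_2\,d\P\ge\int\log(g_1 g g_2)\,d\P$ is false in general, so instead I would apply Jensen with respect to the probability measure $g\,d\P$ — which is legitimate precisely because $g$ is a density — to get $\int g_1 g_2\,(g\,d\P)\ge\exp\!\big(\int\log(g_1 g_2)\,g\,d\P\big)=\exp\!\big(\int\log g_1\,g\,d\P+\int\log g_2\,g\,d\P\big)$, and then use $\E(g\mid\CA_i)=1$ to replace $\int\log g_i\,g\,d\P$ by $\int\log g_i\,d\P=\log\gm(g_i)$ (valid since $\log g_i$ is $\CA_i$-measurable and $\E(g\log g_i\mid\CA_i)=\log g_i$). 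This gives $\int g_1 g g_2\,d\P\ge\gm(g_1)\gm(g_2)$, i.e.\ $\sm_{\CA_1,\CA_2}(g)\ge 1$.

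The main obstacle is the lower bound in \eqref{eqn:doubly}: one has to apply Jensen's inequality with respect to the \emph{right} measure (the one with density $g$) and then exploit the doubly-stochastic property $\E(g\mid\CA_i)=1$ to convert the $g$-weighted log-integrals of $g_i$ back into ordinary log-integrals, which hinges on $g_i$ being $\CA_i$-measurable. Integrability bookkeeping ($\log g_i\in L^1$, $g\log g_i\in L^1$, finiteness of $\int g_1 g g_2\,d\P$) must be handled, but reducing to $g_i\in\CB_i$ via \eqref{eqn:infty} makes all these terms bounded and removes any convergence subtleties.
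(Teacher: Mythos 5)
The paper does not actually prove Theorem~\ref{theorem:function}; it defers to Propositions~3.2 and~3.3 of \cite{BIP}. Your argument is a correct self-contained proof and coincides with the standard one: homogeneity by the change of variables $g_1\mapsto\varphi g_1$, $g_2\mapsto\psi g_2$ in the defining infimum together with $\gm(\varphi g_1)=\gm(\varphi)\gm(g_1)$; the $L_\infty$ restriction by two-sided truncation; and the doubly stochastic case by taking $g_1=g_2\equiv1$ for the upper bound and, for the lower bound, Jensen's inequality with respect to the probability measure $g\,d\P$ followed by $\int g\log g_i\,d\P=\int\E(g\mid\CA_i)\log g_i\,d\P=\log\gm(g_i)$, which is exactly the conditional AM--GM step. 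The only place needing care is the one you flag yourself: in the truncation step, $\int g_1^{(N)}fg_2^{(N)}\,d\P\to\int g_1fg_2\,d\P$ does not follow from a single application of dominated convergence (there is no obvious integrable majorant for $\max(g_1,1)f\max(g_2,1)$), but sending the upper truncation level and the lower truncation level to their limits in two separate stages — or simply noting that one only needs $\limsup$ of the truncated quantities to be bounded by the untruncated one — closes this, using $\int f\,d\P<\infty$ in the relevant setting.
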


We refer the reader to Propositions 3.2 and 3.3 of \cite{BIP} for proofs of these results.

Next, we will also need a functional version of Theorem
\ref{theorem:Sinkhorn:matrix} regarding the Sinkhorn decomposition.

\begin{theorem}\cite[Theorem 3.6]{BIP} \label{theorem:Sinkhorn:function}
Every $f: \Omega \to \R^+$ such that $\log f \in L_\infty(\P)$ has a Sinkhorn
decomposition, that is there exist functions $\varphi\in \CB_1, \psi \in \CB_2$
and $g$ doubly stochastic with respect to $\CA_1$ and $\CA_2$ such that for $\P$-almost every $\omega$

\begin{equation}
f(\omega) = \varphi(\omega) g(\omega) \psi(\omega).
\end{equation}
\end{theorem}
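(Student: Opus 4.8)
Reformulate the statement as a convex minimization problem. By \eqref{eqn:infty} we may compute $\sm_{\CA_1,\CA_2}(f)$ as an infimum over $g_i\in\CB_i$, and since $\log f\in L_\infty(\P)$ we may also parametrize $g_i=e^{u_i}$ with $u_i$ bounded and $\CA_i$-measurable. Then
\[
\log\Big(\tfrac{1}{\gm(g_1)\gm(g_2)}\!\int g_1 f g_2\,d\P\Big)=-\int u_1\,d\P-\int u_2\,d\P+\log\!\int e^{u_1+u_2}f\,d\P ,
\]
and the right-hand side is \emph{convex} in $(u_1,u_2)$ (the first two terms are linear, and $u\mapsto\log\int e^{u}f\,d\P$ is convex by H\"older). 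The plan is: show this convex functional attains its minimum at some $(u_1^\ast,u_2^\ast)$ with $u_1^\ast,u_2^\ast$ still bounded, then read off the decomposition from the first-order optimality conditions. Indeed, varying $u_1$ among mean-zero $\CA_1$-measurable perturbations gives the Euler--Lagrange equation $e^{u_1^\ast}\E(e^{u_2^\ast}f\,|\,\CA_1)=Z$ a.e.\ for the constant $Z:=\int e^{u_1^\ast+u_2^\ast}f\,d\P$, and likewise $e^{u_2^\ast}\E(e^{u_1^\ast}f\,|\,\CA_2)=Z$. Writing $g:=Z^{-1}e^{u_1^\ast+u_2^\ast}f$, the first equation yields $\E(g\,|\,\CA_1)=Z^{-1}e^{u_1^\ast}\E(e^{u_2^\ast}f\,|\,\CA_1)=1$ and the second $\E(g\,|\,\CA_2)=1$, so $g$ is doubly stochastic, and $f=\varphi\,g\,\psi$ with $\varphi:=Z_1e^{-u_1^\ast}\in\CB_1$, $\psi:=Z_2e^{-u_2^\ast}\in\CB_2$ for any constants with $Z_1Z_2=Z$. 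This is exactly the asserted Sinkhorn decomposition, with the $L_\infty$-bounds on $\varphi,\psi,g$ inherited from those on $u_1^\ast,u_2^\ast$ and $\log f$.

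Thus the whole problem reduces to \textbf{existence of a bounded minimizer}. Using homogeneity \eqref{eqn:homogeneity} and the gauge freedom $(u_1,u_2)\mapsto(u_1+t,u_2-t)$, $t\in\R$, we may normalize $\int u_1\,d\P=\int u_2\,d\P=0$, after which the functional is just $\log\int e^{u_1+u_2}f\,d\P$; by Jensen it is bounded below by $\log\operatorname{ess\,inf} f>0$, so a minimizing sequence exists. Two routes then suggest themselves for producing an actual bounded minimizer.

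First, a finite-approximation route that leans on the matrix case. Choose increasing finite $\sigma$-algebras $\CA_i^{(N)}\uparrow\CA_i$ and set $f_N:=\E(f\,|\,\CA_1^{(N)}\vee\CA_2^{(N)})$; relative to $\CA_1^{(N)},\CA_2^{(N)}$ the scaling problem for $f_N$ is, after absorbing the measures of the atoms, precisely a finite matrix scaling problem, to which Theorem~\ref{theorem:Sinkhorn:matrix} applies (full support holds because $f$, hence $f_N$, is bounded away from zero). This gives $f_N=\varphi_N g_N\psi_N$ with $\varphi_N\in\CB_1$, $\psi_N\in\CB_2$, $g_N$ doubly stochastic for $\CA_1^{(N)},\CA_2^{(N)}$; crucially, the matrix ratio bound forces $\operatorname{ess\,sup}\varphi_N/\operatorname{ess\,inf}\varphi_N\le\operatorname{ess\,sup} f/\operatorname{ess\,inf} f$ (and similarly for $\psi_N$), so after the mean normalization the logarithms $\log\varphi_N,\log\psi_N$ are bounded uniformly in $N$. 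One then extracts a subsequential limit: weak-$\ast$ limits of $\CA_i^{(N)}$-measurable functions are $\CA_i$-measurable, the uniform $L_\infty$ bounds provide domination, and the relations $\E(g_N\,|\,\CA_i^{(N)})=1$ together with $\varphi_N g_N\psi_N=f_N\to f$ pass to the limit (along an a.e.-convergent subsequence) to give the decomposition of $f$. Alternatively one can run the functional iterative proportional fitting procedure directly: set $\psi_0\equiv1$ and $\varphi_k:=\E(f/\psi_{k-1}\,|\,\CA_1)$, $\psi_k:=\E(f/\varphi_k\,|\,\CA_2)$; each half-step is exactly coordinate minimization of the convex functional above (by the AM--GM argument behind \eqref{eqn:llp:sm}), so the objective decreases monotonically to $\sm_{\CA_1,\CA_2}(f)$, and a compactness-plus-convexity argument identifies the limiting pair as the sought minimizer.

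The \textbf{main obstacle} is the same in either approach and is where $\log f\in L_\infty(\P)$ is genuinely used: obtaining \emph{uniform} two-sided bounds on the scaling factors, and upgrading weak convergence to convergence strong enough that both the pointwise product identity $f=\varphi g\psi$ and the conditional-expectation identities $\E(g\,|\,\CA_1)=\E(g\,|\,\CA_2)=1$ survive passage to the limit. Once a bounded minimizer (equivalently, a fixed point of the iteration) is in hand, everything else — the Euler--Lagrange computation above, and the fact that $g=f/(\varphi\psi)$ automatically has $\log g\in L_\infty$ — is routine; for the precise execution we refer to \cite[Theorem 3.6]{BIP}.
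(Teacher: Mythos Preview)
The paper does not prove this statement: it is quoted from \cite[Theorem~3.6]{BIP} and used without proof as an input to the reduction in Section~2. There is therefore nothing in the paper to compare your argument against.

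As a standalone sketch your approach is sound in outline---the convex reformulation is standard, the Euler--Lagrange computation correctly recovers the doubly-stochastic conditions, and the ratio bound $\operatorname{ess\,sup}\varphi_N/\operatorname{ess\,inf}\varphi_N\le\operatorname{ess\,sup} f/\operatorname{ess\,inf} f$ (which follows from $\varphi_N=\E(f_N\psi_N^{-1}\,|\,\CA_1^{(N)})$ and pointwise comparison of the integrands) is the right mechanism for uniform control. But you explicitly leave the hard part undone: you flag the ``main obstacle'' (passing both the product identity and the conditional-expectation constraints to the limit) and then defer to \cite[Theorem~3.6]{BIP} for its resolution. Since that reference \emph{is} the theorem you are trying to prove, the argument as written is circular. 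To make it self-contained you would need to actually carry out one of the two routes you describe---for instance, prove convergence of the IPF iterates via a contraction argument in the Hilbert projective metric, or show directly that a weak-$\ast$ limit of the finite Sinkhorn factors still satisfies $\E(g\,|\,\CA_i)=1$.
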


By Theorem \ref{theorem:Sinkhorn:function}, for $\P$-almost every $\omega$ we can write $f(\omega)=\varphi(\omega) g(\omega)
\psi(\omega)$ for some $\varphi \in \CB_1, \psi \in \CB_2$, and $g$
doubly stochastic, and so

$$f(T^{(i,j)}(\omega)) =\varphi(T^{(i,j)}(\omega)) g(T^{(i,j)}(\omega))
\psi(T^{(i,j)}(\omega)).$$

Now, as $\varphi$ is $\CA_1$-measurable, for $\P$-almost every $\omega$ the following holds for any fixed $j$ and for  every $1\le i\le n$

$$\varphi(T^{(i,j)}(\omega))= \varphi(T^{(i,0)}(T^{(0,j)}(\omega))) = \varphi(T^{(1,0)}(T^{(0,j)}(\omega))= \varphi(T^{(1,j)}(\omega)).$$

Similarly,  because $\psi$ is $\CA_2$-measurable and $T^{(1,0)}$ and $T^{(0,1)}$ commute, for any fixed $i$
and for every   $1\le j\le n$

$$\psi(T^{(i,j)}(\omega))= \psi(T^{(i,0)}(T^{(0,j)}(\omega)) =\psi(T^{(i,0)}(T^{(0,1)}(\omega)) =\psi(T^{(i,1)}(\omega)).$$

As a consequence, for $\P$-almost every $\omega$

$$\per(\Box_n f(\omega))) = \left (\prod_{i=0}^{n-1}
\varphi(T^{(i,1)}(\omega))\right)^{1/n}  \left(\prod_{i=0}^{n-1} \psi
(T^{(1,i)}(\omega))\right)^{1/n}  \per (\Box_n g(\omega)) .$$

Now by the ergodic theorem

$$ \lim_{n\to \infty} \frac{1}{n}\sum_{0\le i\le n-1} \log \varphi(T^{(i,1)}(\omega))) =\log  \gm(\varphi),$$

and similarly 

$$ \lim_{n\to \infty} \frac{1}{n}\sum_{0\le j\le n-1} \log \psi(T^{(1,j)}(\omega))) =\log  \gm(\psi), \mbox{ for } \P-\mbox{almost every } \omega.$$

Thus by \eqref{eqn:homogeneity} of Theorem \ref{theorem:function}, it
suffices to establish Theorem \ref{theorem:llp} for doubly stochastic
function $g$. In other words, by \eqref{eqn:doubly} of Theorem \ref{theorem:function} we will need to show the following.

\begin{theorem}\label{theorem:llp:doubly} Assume that $\log g \in L_\infty(\P)$ and $g$  is doubly stochastic with respect to
  $\CA_1$ and $\CA_2$. Then 
\begin{equation}\label{eqn:llp:doubly}
\lim_{n\to \infty} \per(\Box_ng(\omega))=1 \mbox{ for } \P-\mbox{almost every } \omega.
\end{equation}
\end{theorem}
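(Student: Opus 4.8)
The plan is to prove Theorem~\ref{theorem:llp:doubly} via the three-step strategy announced in the introduction: having already reduced to a doubly stochastic function $g$ with $\log g\in L_\infty(\P)$, I now need the two-sided estimate $\lim_n \per(\Box_n g(\omega))=1$. The lower bound is the easy direction: since $\Box_n g(\omega)$ has non-negative entries, one can apply the scaling-mean machinery (Claim~\ref{claim:llp:matrix} together with the homogeneity \eqref{eqn:DGE}) after observing that the row and column sums of $\Box_n g(\omega)$ concentrate around $n$ by the ergodic theorem; more directly, $\per(\Box_n g)\ge \sm(\Box_n g)$, and $\sm(\Box_n g)\to 1$ because the Sinkhorn normalization of $\Box_n g$ converges to $J_n$ in the sense that the scaling factors have geometric mean tending to $1$ (here the doubly stochasticity of $g$ relative to $\CA_1,\CA_2$ and the $L^1$-ergodic theorem for the conditional expectations $\E(g\mid\CA_i)$ do the work). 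So the real content is the \emph{upper bound} $\limsup_n \per(\Box_n g(\omega))\le 1$.

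For the upper bound, the key step (2) is to pass from $g$ to a genuinely doubly stochastic \emph{matrix}. Fix $\omega$ in the full-measure set where all relevant ergodic averages converge. The matrix $A_n:=\Box_n g(\omega)$ has entries in $[e^{-\|\log g\|_\infty}, e^{\|\log g\|_\infty}]$, and its row sums $r_i=\sum_{j=0}^{n-1} g(T^{(i,j)}\omega)$ and column sums $c_j=\sum_{i=0}^{n-1} g(T^{(i,j)}\omega)$ are each $n+o(n)$, \emph{uniformly} in $i$ and $j$ — this uniformity is exactly what the ergodic theorem gives when combined with the doubly stochastic hypothesis $\E(g\mid\CA_1)=\E(g\mid\CA_2)=1$, since the averages $\frac1n r_i$ are ergodic averages of $\E(g\mid\CA_1)\equiv 1$ along the $T^{(0,1)}$-orbit. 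I then rescale: set $\tilde A_n = D_n A_n E_n$ with $D_n=\diag(1/r_i)$ applied first and a correction $E_n$ to fix the columns (a single Sinkhorn step, or the existence half of Theorem~\ref{theorem:Sinkhorn:matrix}), so that $\tilde A_n$ is doubly stochastic. By \eqref{eqn:llp:perm:scaling}, $\per(A_n) = \gm(\Bd)^{-1}\per(\tilde A_n)\gm(\Be)^{-1}$ where the geometric means of the scaling vectors are $1+o(1)$ because $r_i=n(1+o(1))$ uniformly; moreover the rescaling only moves entries by a $1+o(1)$ factor, so $\tilde A_n$ still has entries bounded above by $C=e^{\|\log g\|_\infty}+o(1)$. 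Thus it suffices to show: \emph{if $B_n\in\Omega_n$ has all entries $\le C/n$, then $\per(B_n)=(\perm(B_n)/n!)^{1/n}\to$ something $\le 1$}, and in fact the bounded-entry hypothesis should force $\per(B_n)\le 1+o(1)$.

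This last point is step (3), and I expect it to be the main obstacle. Van der Waerden gives $\per(B_n)\ge 1$ for any $B_n\in\Omega_n$, and the upper bound \eqref{eqn:doubly:W} only gives $\perm(B_n)\le 1$, i.e. $\per(B_n)\le (n!)^{-1/n}\cdot n = O(1)$ but with the wrong constant ($\to e$, not $\to 1$); the bounded-entry condition is what must close the gap. The natural tool is a Bregman–Minc / Schrijver–type upper bound on permanents of $0/1$ or bounded matrices, or more precisely a continuous analogue: for a doubly stochastic matrix with entries $\le C/n$ one expects $\perm(B_n)\le (1+o(1))^n\, n!/n^n$, equivalently an entropy-type bound $\frac1n\log\perm(B_n)\le \frac1n\log(n!/n^n) + o(1) = -1 + o(1)$. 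I would try to derive this from the Falikman–Egorychev equality case (the minimum is uniquely $J_n$ and is ``stable'') together with a convexity/entropy argument — e.g. writing $\log\perm$ as a function on $\Omega_n$ and using that on the slice $\{$entries $\le C/n\}$ it cannot exceed its value at $J_n$ by more than $o(n)$ — or, more robustly, invoke the Gurvits capacity bound: $\perm(B_n)\ge \mathrm{cap}(B_n)\ge e^{-n}\perm(B_n)$ is the wrong direction, so instead I would use the Gurvits lower bound in the reverse role, bounding $\perm$ from above via a direct estimate $\perm(B_n)\le \prod_i (\text{something})$. Realistically the cleanest route, and the one I would pursue first, is the Schrijver–Valiant / Bregman bound in the form: for $B_n\in\Omega_n$ with $m_{ij}\le C/n$, $\perm(B_n)\le \prod_{i,j}(1-m_{ij})^{-(1-m_{ij})/\,?}$ — adapting Schrijver's proof of the Bregman inequality, whose extremal configuration is again the regular one — to conclude $\frac1n\log\perm(B_n)\to -1$ when the entries are uniformly $O(1/n)$. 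Verifying that this adapted inequality holds with the right error term, uniformly in $n$, under only the $O(1/n)$ entry bound, is the technical heart of the argument and where I would expect to spend most of the effort; everything before it is ergodic-theoretic bookkeeping that the cited results of \cite{BIP} largely handle.
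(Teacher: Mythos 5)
Your overall architecture matches the paper's: reduce to a genuine doubly stochastic matrix with entries $O(1/n)$, then prove a permanent upper bound of the form $(1+o(1))^n\,n!/n^n$ for such matrices. But there are two genuine gaps. First, your claim that the row sums $r_i=\sum_{j} g(T^{(i,j)}\omega)$ equal $n+o(n)$ \emph{uniformly} in $0\le i\le n-1$ is not what the ergodic theorem gives: $r_i/n$ is a Birkhoff average along the $T^{(0,1)}$-orbit started at $T^{(i,0)}\omega$, and as $i$ ranges up to $n-1$ these starting points move with $n$, so pointwise a.e.\ convergence at a fixed $\omega$ does not transfer to all rows simultaneously. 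The paper only establishes (Lemma \ref{lemma:approx:doubly}, via the $L_1$ ergodic theorem plus Markov's inequality) that outside an exceptional set of measure $\eps$, all but $\eps n$ of the rows and columns have sums in $[(1-\eps)n,(1+\eps)n]$; it then needs a separate combinatorial surgery (Lemma \ref{lemma:L1:doubly}) to truncate the bad rows and columns and repair the remaining sums, producing a matrix in $n\cdot\Omega_n$ that is $L_1$-close to $\Box_n g(\omega)$, and finally invokes the $L_1$-stability of $\log\per$ for bounded matrices (Lemma \ref{lemma:L1}). Your ``single Sinkhorn step'' does not yield a doubly stochastic matrix (fixing the columns perturbs the rows again), and controlling the geometric means of the true Sinkhorn scaling factors of $\Box_n g(\omega)$ is not obviously easier --- the paper explicitly declines to take that route.

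Second, and more seriously, the crux --- that $B_n\in\Omega_n$ with all entries at most $C/n$ forces $\perm(B_n)\le (1+o(1))^n\,n!/n^n$ --- is exactly the paper's Theorem \ref{theorem:doubly} (Conjecture 6.2 of \cite{BIP}), and your proposal leaves it unproved: you enumerate candidate strategies (stability in Falikman--Egorychev, Gurvits capacity --- which, as you note, points the wrong way, an adapted Schrijver/Bregman inequality) without carrying any of them out, and you acknowledge this is where the effort lies. The paper's actual argument is short and elementary: by compactness there is a permanent-maximizing matrix subject to the row-sum and entry constraints, and a Laplace-expansion shifting argument (Lemma \ref{lemma:shifting}) shows it can be taken so that each row has all entries in $\{0,\lambda\}$ except at most one; rounding the exceptional entries up and dividing by $\lambda$ yields a $\{0,1\}$-matrix with row sums $m=\lceil n/\lambda\rceil$, to which Bregman--Minc applies directly, giving $\perm(A_n)\le \lambda^n (m!)^{n/m}\le e^{2\lambda} n^{(\lambda-1)/2} n!$. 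You name Bregman--Minc as the natural tool, but the reduction to the $\{0,1\}$ case --- the actual content of step (3) --- is missing.
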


From now on we assume that there exists $\lambda>1$ such that for $\P$-almost every $\omega$

$$\lambda^{-1} \le g(\omega) \le \lambda.$$

All of the implied constants below are allowed to depend on $\lambda$. For Theorem \ref{theorem:llp:doubly}, by a limiting argument, it suffices to show the following asymptotic analog.

\begin{theorem}\label{theorem:llp:approx} For any $\eps>0$, there exists $n_0=n_0(\eps,\lambda)$ such
  that for any $n\ge n_0$, there exists a
  measurable set $\CE_n$ of measure at most $\eps$ such that for all
  $\omega \notin \CE_n$ we have 

$$1- C \eps \le \per(\Box_n g(\omega)) \le 1+ C \eps,$$

where $C$ is a constant depending on $\lambda$.
\end{theorem}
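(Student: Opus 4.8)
The plan is to split the estimate into a lower bound and an upper bound, with the upper bound being the substantial part. For the lower bound, recall that for any doubly stochastic matrix the van der Waerden inequality \eqref{eqn:doubly:W} gives $\perm(\Box_n g(\omega)) \ge n!/n^n$, hence $\per(n^{-1}\Box_n g(\omega)) \ge 1$ provided $\Box_n g(\omega)$ is itself (approximately) doubly stochastic. By the pointwise ergodic theorem applied to $\E(g \mid \CA_1) = 1$ and $\E(g\mid\CA_2)=1$, for $\P$-almost every $\omega$ the row sums $\frac1n\sum_{j=0}^{n-1} g(T^{(i,j)}\omega)$ and the column sums $\frac1n\sum_{i=0}^{n-1} g(T^{(i,j)}\omega)$ all converge to $1$; quantifying this via a maximal-inequality/Egorov argument, outside a set $\CE_n$ of measure $\le\eps$ one has all row and column sums of $\frac1n \Box_n g(\omega)$ within $\eps$ of $1$. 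A diagonal scaling $D_n, E_n$ with entries within $O(\eps)$ of $1$ then turns $\frac1n\Box_n g(\omega)$ into a genuine doubly stochastic matrix, and the scaling identity \eqref{eqn:llp:perm:scaling} together with $\gm(\Bd),\gm(\Be) = 1+O(\eps)$ transfers the van der Waerden bound back, giving $\per(\Box_n g(\omega)) \ge 1 - C\eps$.

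For the upper bound one cannot simply quote \eqref{eqn:doubly:W}, since the permanental mean of a doubly stochastic matrix can be as large as $e$; the point is that $\Box_n g(\omega)$ is a very special doubly stochastic matrix whose entries are bounded in $[\lambda^{-1},\lambda]$ and which, because it comes from an ergodic $\BN^2$-action, is ``spread out'' rather than concentrated near a permutation matrix. The natural tool is a Bregman–Minc–type bound, $\perm(A) \le \prod_i (r_i!)^{1/r_i}$ for $0$–$1$ matrices with row sums $r_i$, but in the form valid for nonnegative matrices with bounded entries: more useful here is the bound $\per(M_n) \le \sm(M_n) \cdot e$ from Claim \ref{claim:llp:matrix} combined with an entropy argument, or directly the estimate that for a doubly stochastic $M_n$ with all entries $\le \lambda/n$ one has $\per(M_n) = 1 + o(1)$. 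I would prove the latter by writing $\perm(M_n)/n! = \E_\pi\big[\prod_i n\, m_{i\pi(i)}\big]$ where $\pi$ is a uniform random permutation, taking logarithms, and showing the random variable $\frac1n\sum_i \log(n\, m_{i\pi(i)})$ concentrates at $0$. Its mean is $\frac1n\sum_{i,j} m_{ij}\log(n m_{ij})$, which by double stochasticity and boundedness is $O(1)$ but must be shown to be $o(1)$ using the ergodic ``mixing'' of $g$; its fluctuations are controlled because swapping two coordinates of $\pi$ changes the sum by $O(\log\lambda / n)$, so a bounded-differences (Azuma/Hoeffding on the symmetric group, e.g. via the Chatterjee–type concentration for random permutations) inequality gives exponential concentration on a scale $n^{-1/2}$.

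The main obstacle is showing that the log-mean $\frac1n\sum_{i,j} m_{ij}\log(n m_{ij})$ is actually $o(1)$, equivalently that the entries $n\,m_{ij} = g(T^{(i,j)}\omega)$ are, on average over the grid, close to their geometric-mean-normalized value — this is where the hypothesis that $g$ is doubly stochastic \emph{and} that $T$ is ergodic must both be used in an essential way, and it is precisely the functional analogue of the fact that $J_n$ minimizes the permanent. I expect to handle this by invoking the already-established functional facts: since $g$ is doubly stochastic, $\sm_{\CA_1,\CA_2}(g)=1$ by \eqref{eqn:doubly}, and by Claim \ref{claim:llp:matrix} (applied to the Sinkhorn-rescaled matrices $\Box_n g(\omega)$, whose rescalings converge by the continuity in Theorem \ref{theorem:Sinkhorn:matrix}) the ratio $\per(\Box_n g(\omega))/\sm(\Box_n g(\omega))$ lies in $[1,e]$; one then shows $\sm(\Box_n g(\omega)) \to \sm_{\CA_1,\CA_2}(g) = 1$ along almost every $\omega$ by approximating the infimum-over-functions by infimum-over-grid-vectors using the ergodic theorem on the test functions $g_1\in\CB_1, g_2\in\CB_2$ realizing the scaling limit. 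Combined with the lower bound this already pins the limit between $1$ and $e$; the remaining gap from $e$ down to $1$ is closed by the concentration argument above, which upgrades the crude Claim \ref{claim:llp:matrix} bound once we know the entries are bounded, since boundedness forces the Bregman-type overcount to be $1+o(1)$ rather than $e$. Assembling these pieces, choosing $n_0$ large enough that all the ergodic averages and the concentration tail are within $\eps$, and collecting the exceptional sets into a single $\CE_n$ of measure $\le\eps$, yields the two-sided bound $1-C\eps \le \per(\Box_n g(\omega)) \le 1+C\eps$ for all $\omega\notin\CE_n$.
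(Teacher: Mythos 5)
Your upper bound is the part that carries all the weight, and the concentration argument you propose for it does not work. Writing $\perm(M_n)/n! = \E_\pi\bigl[\prod_i n\,m_{i\pi(i)}\bigr] = \E_\pi[e^{nX}]$ with $X=\frac1n\sum_i\log(n\,m_{i\pi(i)})$, you need $(\E_\pi[e^{nX}])^{1/n}=1+o(1)$, i.e.\ control of an \emph{exponential moment} at scale $e^{o(n)}$. Bounded-differences concentration of $X$ around its mean at scale $n^{-1/2}$ (tails $\P(X-\E X>t)\le e^{-cnt^2}$) only gives $\E_\pi[e^{n(X-\E X)}]\le e^{n/(4c)}=e^{\Theta(n)}$: the exponential moment is dominated by the upper tail at $t=\Theta(1)$, and after taking $n$-th roots you recover only $\per\le e^{O(1)}$, which is no better than the crude bound from Claim \ref{claim:llp:matrix}. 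Moreover the centering is wrong in two ways: $\E_\pi X=\frac1{n^2}\sum_{i,j}\log(n\,m_{ij})$ (not the $m_{ij}$-weighted sum you wrote), and by AM--GM this quantity is a strictly \emph{negative} constant whenever the entries are not all equal to $1/n$ --- it is not $o(1)$, and indeed must not be, since $e^{\E_\pi X}<1$ is only a Jensen lower bound for the quantity that actually converges to $1$. The paper closes the gap from $e$ to $1$ by an entirely different mechanism (Theorem \ref{theorem:stochastic}): an extremal argument (Lemma \ref{lemma:shifting}) pushes the permanent maximizer over $n\cdot\CS_n$ with entries $\le\lambda$ to a $\{0,\lambda\}$-matrix with $\approx n/\lambda$ nonzeros per row, and the Bregman--Minc inequality then gives $\perm\le\lambda^n(m!)^{n/m}\le e^{2\lambda}n^{(\lambda-1)/2}\,n!$, whose $n$-th root is $1+o(1)$. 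You mention Bregman--Minc in passing but never actually deploy it; some such structural input is unavoidable here.

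Two further points on the reduction and the lower bound. First, the ergodic theorem does not give you, outside a set of measure $\eps$, that \emph{all} $n$ row and column sums of $\frac1n\Box_n g(\omega)$ are within $\eps$ of $1$; that would require a rate of convergence (a union bound over $n$ orbit points), which Birkhoff does not provide. The paper's Lemma \ref{lemma:approx:doubly} only controls all but $\eps n$ rows and columns, and the exceptional ones must then be handled by the truncation-and-repair argument of Lemma \ref{lemma:L1:doubly} together with the $L^1$-stability of $\per$ (Lemma \ref{lemma:L1}). Second, your claim that a diagonal scaling with factors $1+O(\eps)$ turns $\frac1n\Box_n g(\omega)$ into a genuinely doubly stochastic matrix is a quantitative stability statement for Sinkhorn scaling that you assert without proof; the paper deliberately avoids it. The lower bound itself ($\ge n!/n^n$ after the $L^1$-approximation by a true element of $n\cdot\Omega_n$) is fine once the approximation is in place.
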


\section{Proof of Theorem \ref{theorem:llp}: approximation by doubly stochastic matrices of bounded entries}\label{section:llp}

We next show that most of the row sums and column sums of the matrix $\frac{1}{n}\Box_n g(\omega)$ are asymptotically the same.

\begin{lemma}\label{lemma:approx:doubly} With an exception of at most $\eps n$ rows and columns, the following holds for the rows $i$ and  columns $j$ of  the matrix $\Box_n g(\omega)$

$$(1-\eps)n \le \sum_{k=0}^{n-1} g(T^{(i,k)}(\omega)), \quad   \sum_{k=0}^{n-1} g(T^{(k,j)}(\omega)) \le (1+\eps) n.$$

\end{lemma}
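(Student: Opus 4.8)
The plan is to prove Lemma \ref{lemma:approx:doubly} by a direct application of the pointwise ergodic theorem for the $\BN^2$-action, combined with a Fubini/Markov argument to control the exceptional set of rows and columns.

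First I would observe that since $g$ is doubly stochastic with respect to $\CA_1$ and $\CA_2$, we have $\E(g \mid \CA_1) = 1$ almost everywhere. The key point is that averaging $g$ along a row of $\Box_n g(\omega)$, namely $\frac{1}{n}\sum_{k=0}^{n-1} g(T^{(i,k)}(\omega))$, is an ergodic average for the transformation $T^{(0,1)}$ applied to the function $g$ at the base point $T^{(i,0)}(\omega)$. By the pointwise ergodic theorem applied to $T^{(0,1)}$ (whose invariant $\sigma$-algebra is $\CA_2$), for $\P$-almost every $\omega'$ we have $\frac{1}{n}\sum_{k=0}^{n-1} g(T^{(0,k)}(\omega')) \to \E(g \mid \CA_2)(\omega')$. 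Since $g$ is doubly stochastic, this limit is $1$ almost everywhere. Applying this with $\omega' = T^{(i,0)}(\omega)$ — which is still distributed according to $\P$ because $T^{(1,0)}$ is measure-preserving — and symmetrically for columns using $T^{(1,0)}$ and $\CA_1$, I obtain that for $\P$-almost every $\omega$,
$$
\lim_{n\to\infty} \frac{1}{n}\sum_{k=0}^{n-1} g(T^{(i,k)}(\omega)) = 1 \quad\text{for every fixed } i,
$$
and likewise for the column sums.

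Next I would quantify the convergence uniformly. Fix $\eps > 0$. Define the "bad row" indicator: row $i$ is $\eps$-bad at scale $n$ if $\left| \frac{1}{n}\sum_{k=0}^{n-1} g(T^{(i,k)}(\omega)) - 1 \right| > \eps$. By the ergodic convergence above applied at base point $T^{(i,0)}(\omega)$ and the measure-preservation of $T^{(1,0)}$, the probability (over $\omega$) that row $i$ is $\eps$-bad at scale $n$ equals the probability that $\left| \frac{1}{n}\sum_{k=0}^{n-1} g(T^{(0,k)}(\omega')) - 1 \right| > \eps$, which does not depend on $i$ and tends to $0$ as $n\to\infty$. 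Call this quantity $\delta_n \to 0$. Then the expected number of $\eps$-bad rows among $0\le i\le n-1$ is $n\delta_n$, so by Markov's inequality the probability that more than $\eps n$ rows are $\eps$-bad is at most $\delta_n/\eps$. Choosing $n$ large enough that $\delta_n \le \eps^2/2$ handles the rows; the same argument with $T^{(1,0)}$ and $\CA_1$ handles the columns, and a union bound combines them into a single exceptional set of measure at most $\eps$ (after renaming $\eps$ by a constant factor, or by applying the argument with $\eps/4$ throughout). Finally, since $\lambda^{-1}\le g\le\lambda$ a.e., all row and column sums of $\Box_n g(\omega)$ lie in $[\lambda^{-1}n, \lambda n]$ deterministically, so "bad" rows are still under control and one may multiply back by $n$ to get the stated bounds $(1-\eps)n \le \sum_k g(T^{(i,k)}(\omega)) \le (1+\eps)n$ for all but at most $\eps n$ rows $i$, and similarly for columns.

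The main obstacle, and the one point requiring care, is making the ergodic convergence rate $\delta_n$ genuinely uniform in the shifted base point: a priori the pointwise ergodic theorem gives convergence for a.e. base point but no uniform rate. The resolution is exactly the measure-preservation step above — because $T^{(1,0)}$ preserves $\P$, the distribution of the row average at row $i$ is identical to the distribution at row $0$, so the single quantity $\delta_n = \P\left(\left|\frac{1}{n}\sum_{k=0}^{n-1} g(T^{(0,k)}(\cdot)) - 1\right| > \eps\right)$ controls every row simultaneously, and this $\delta_n \to 0$ by the ergodic theorem together with dominated convergence (using the uniform bound $g\le\lambda$). I expect no further difficulty beyond bookkeeping the constants in the union bound over rows and columns.
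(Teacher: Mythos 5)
Your proposal is correct and follows essentially the same route as the paper: both arguments rest on the ergodic theorem giving that the row/column averages converge to $\E(g\mid\CA_2)=\E(g\mid\CA_1)=1$, plus the observation that measure-preservation makes every shifted base point equidistributed, followed by a Markov/double-counting bound on the number of bad rows and columns. The only cosmetic difference is that the paper invokes the $L^1$ (mean) ergodic theorem to get a quantitative $L^1$ bound and then applies Markov twice, whereas you use the pointwise theorem to get convergence in probability and apply Markov to the expected count of bad rows; both are sound.
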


\begin{proof}(of Lemma \ref{lemma:approx:doubly}) Set

$$g_n^{(1)}(\omega):= \frac{1}{n}\sum_{i=0}^{n-1}g(T^{(i,0)}(\omega)) \quad \quad 
\text { and } \quad \quad 
g_n^{(2)}(\omega):= \frac{1}{n}\sum_{i=0}^{n-1}g(T^{(0,i)}(\omega)).$$
 
By Birkhoff's ergodic theorem in $L_1$ (see for instance \cite[Theorem 2.1.5]{Keller}),

$$\lim_{n\to \infty}\frac{1}{n}g_n^{(1)} \to \E(g|\CA_1)=1 \mbox{ in } L_1(\P),$$

and 

$$\lim_{n\to \infty}\frac{1}{n}g_n^{(2)} \to \E(g|\CA_2)=1 \mbox{ in } L_1(\P),$$

where we used the fact that $g$ is doubly stochastic with respect
to $\CA_1$ and $\CA_2$. 

Thus for any $\eps>0$, there exists $n_0=n_0(\eps)$ such
that for $n\ge n_0$ we have 

\begin{equation}\label{eqn:L1:g12}
\int_{\Omega} |g_n^{(1)}(\omega)
-1|d\P(\omega) \le \eps^4 \mbox{ and } \int_{\Omega} |g_n^{(2)}(\omega)
-1|d\P(\omega) \le \eps^4.
\end{equation}

Next, define

$$\CE_n^{(1)}:=\Big \{\omega: \frac{1}{n} \sum_{k=0}^{n-1}
|g_n(T^{(0,k)}(\omega))-1| > \eps^2 \Big \}$$

as well as 

$$\CE_n^{(2)}:=\Big \{\omega: \frac{1}{n} \sum_{k=0}^{n-1}
|g_n(T^{(k,0)}(\omega))-1| > \eps^2 \Big \}.$$

By Markov's bound,

\begin{align*}
\P(\CE_n^{(1)}) &\le \eps^{-2} \int_{\Omega} \frac{1}{n} \sum_{k=0}^{n-1}
|g_n^{(1)}(T^{(0,k)}(\omega))-1| d\P(\omega)\\
&\le \eps^{-2} \frac{1}{n}\sum_{k=0}^{n-1} \int_{\Omega} \
|g_n^{(1)}(T^{(0,k)}(\omega))-1| d\P(\omega)\\
&=  \eps^{-2} \int_{\Omega} \
|g_n^{(1)}(\omega)-1| d\P(\omega) \le \eps^2,
\end{align*}

where we just used the fact that $T^{(0,1)}$ is measure preserving together with the bound \eqref{eqn:L1:g12} on the $L_1$-norm of $g_n^{(1)}-1$.

Similarly, we also have 

$$\P(\CE_n^{(2)})\le \eps^2.$$ 

Let $\omega \in \Omega \backslash (\CE_n^{(1)}\cup \CE_n^{(2)})$. By definition

$$ \frac{1}{n} \sum_{k=0}^{n-1} |g_n^{(1)}(T^{(0,k)}(\omega))-1| < \eps^2.$$

Thus by averaging, for all but at most $\eps n$ indices $k \in
\{0,\dots, n-1\}$, $ |g_n^{(1)}(T^{(0,k)}(\omega))-1| < \eps$. In other words, by the definition of $g_n$

$$\left|\sum_{i=0}^{n-1}g(T^{(i,k)}(\omega))-n\right| \le \eps n.$$

Similarly,  for all but at most $\eps n$ indices $k \in
\{0,\dots, n-1\}$, 

$$\left|\sum_{i=0}^{n-1}g(T^{(k,i)}(\omega))-n\right| \le \eps n.$$

\end{proof}

As we have seen from Lemma \ref{lemma:approx:doubly}, most of the row sums and column sums of the matrix $\Box_n (\omega)$ are asymptotically $(1+o(1))n$. In the next lemma we show that this matrix can be approximated by a genuine doubly stochastic
matrix in $L_1$-norm.

\begin{lemma}\label{lemma:L1:doubly} Let $0<\eps< 1<\lambda$ be given positive
  constants, where $\eps$ is sufficiently small depending on $\lambda$. Suppose that $X_n$ is a matrix with the following
  properties
\begin{itemize}
\item $\lambda^{-1} \le x_{ij} \le \lambda$;
\vskip .1in
\item all but at most $\eps n$ rows and columns of $X_n$ have sum
  belonging to the
  range $[(1-\eps) n, (1+\eps)n]$.
\end{itemize}
Then there exists  $X_n' \in n \cdot \Omega_n$ such that $
(2\lambda)^{-1} \le x_{ij}' \le 2\lambda$ for all $i,j$ and

$$\sum_{1\le i,j \le n}|x_{ij}-x_{ij}'| \le 16 \eps \lambda^2  n^2.$$
\end{lemma}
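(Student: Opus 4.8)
The plan is to correct the matrix $X_n$ to a scaled doubly stochastic matrix in two stages: first discard the few bad rows and columns by replacing their entries, then apply a single Sinkhorn-type (diagonal) rescaling to fix all row and column sums exactly, controlling the $L_1$ cost at each stage. Let $B$ be the set of at most $\eps n$ indices that are "bad" as a row index or as a column index; so $|B| \le 2\eps n$. First I would modify $X_n$ only on rows and columns indexed by $B$: for such a row, rescale its entries by the positive scalar that makes its sum equal to $n$ (and similarly for columns), or more simply just overwrite all entries in those rows and columns by some fixed value in $[\lambda^{-1},\lambda]$ chosen so the resulting matrix $Y_n$ has the property that every row and column sum lies in $[(1-\eps)n,(1+\eps)n]$. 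Since entries lie in $[\lambda^{-1},\lambda]$, altering a single entry costs at most $\lambda - \lambda^{-1} \le \lambda$, and we alter at most $2 |B| \cdot n \le 4\eps n^2$ entries, so $\sum|x_{ij}-y_{ij}| \le 4\eps\lambda n^2$, while $Y_n$ still has entries in $[\lambda^{-1},\lambda]$ and now \emph{all} row and column sums in $[(1-\eps)n,(1+\eps)n]$ (after a harmless further rescaling of the $B$-rows/columns, which only changes entries already counted).

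Second, I would fix the remaining small discrepancies in the row and column sums. A clean way is to run one or two steps of Sinkhorn scaling: let $r_i = n/(\text{row sum of }Y_n)_i \in [(1+\eps)^{-1},(1-\eps)^{-1}]$, replace $Y_n$ by $\diag(r_i) Y_n$ (now all row sums are exactly $n$, column sums still within a factor $(1\pm\eps)^{\pm1}$ of $n$), then do the same on columns, and iterate; since after each half-step the sums that were just fixed drift by at most a multiplicative $(1\pm O(\eps))$, the process converges geometrically and the total multiplicative correction applied to any entry is $1 + O(\eps)$. Alternatively, and perhaps more transparently for a single clean bound, invoke the fact that a nonnegative matrix all of whose line sums are within $(1\pm\eps)n$ of $n$ differs in $L_1$ from its Sinkhorn limit $X_n' \in n\cdot\Omega_n$ by $O(\eps n^2)$: indeed each diagonal scaling factor is $1 + O(\eps)$, so each entry changes by at most $O(\eps)\cdot\lambda$, giving $\sum |y_{ij} - x'_{ij}| \le O(\eps\lambda) n^2$; and since the scaling factors stay bounded by, say, $2$ in absolute value (for $\eps$ small depending on $\lambda$), the entries of $X_n'$ remain in $[(2\lambda)^{-1}, 2\lambda]$. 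Combining the two stages via the triangle inequality gives $\sum_{i,j}|x_{ij} - x'_{ij}| \le 4\eps\lambda n^2 + C\eps\lambda n^2 \le 16\eps\lambda^2 n^2$ once constants are tracked (using $\lambda>1$, $\lambda^2 \ge \lambda$).

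The main obstacle is the second stage: controlling the Sinkhorn iteration quantitatively with an $L_1$ bound that is \emph{linear} in $\eps$ and loses only a bounded factor ($2$) in the entry bounds. One must verify that the diagonal correction factors remain uniformly close to $1$ throughout the iteration — this is where $\eps$ small relative to $\lambda$ is used — and that the geometric convergence rate does not degrade. I would handle this by showing that after the first row-normalization all column sums lie in $[(1-\eps)n(1-O(\eps)), (1+\eps)n(1+O(\eps))]$ and inductively that the deviation of the line sums from $n$ contracts by a fixed factor bounded away from $1$ (depending only on $\lambda$), so the cumulative log-correction on each line is a convergent geometric series of size $O(\eps)$; exponentiating, each entry is multiplied by $e^{O(\eps)} = 1 + O(\eps)$. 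An entirely elementary alternative that avoids iteration: apply the Birkhoff–von Neumann-type perturbation argument directly, writing $X_n' = Y_n + E$ where $E$ is supported on a small correction and solving the line-sum constraints by a transportation (flow) argument; the total mass to be moved is $O(\eps n^2)$ since each line sum is off by at most $O(\eps n)$, and one can route it while keeping entries in $[(2\lambda)^{-1},2\lambda]$. Either route yields the stated inequality after bookkeeping the constants.
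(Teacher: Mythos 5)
Your overall plan (clean up the bad lines, then fix the remaining $O(\eps)$ discrepancies in the line sums) matches the paper's in its first stage, but your second stage rests on a claim you have not proved and whose most natural proof does not close. You assert that after each Sinkhorn half-step ``the deviation of the line sums from $n$ contracts by a fixed factor bounded away from $1$ (depending only on $\lambda$).'' Try to verify this directly: if all row sums equal $n$ and the column sums are $n(1+\delta_j)$ with $\sum_j\delta_j=0$, then after column normalization the new row sums satisfy $|r_i'-n|\le\max_j|y_{ij}-1|\cdot\sum_j|\delta_j|+O(\delta^2\lambda n)\le(\lambda-1)\sum_j|\delta_j|+\cdots$, which is a contraction only when $\lambda<2$. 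For general $\lambda$ the convergence of Sinkhorn iteration, with the uniform $1+O_\lambda(\eps)$ control on the cumulative diagonal factors that your $L_1$ bound and the entry bounds $[(2\lambda)^{-1},2\lambda]$ both require, is genuinely nontrivial: one needs the Birkhoff contraction coefficient in Hilbert's projective metric (Franklin--Lorenz), applied to the scaling vectors rather than to the line sums. With that tool the argument does go through and yields $\sum|y_{ij}-x_{ij}'|\le C(\lambda)\eps n^2$ (though tracking constants through the geometric series will likely not reproduce $16\eps\lambda^2 n^2$ exactly; for the application any $C(\lambda)\eps n^2$ suffices). As written, however, the crucial step is a sketch whose proposed justification fails for $\lambda\ge 2$, so there is a real gap. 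Your alternative ``transportation/flow'' suggestion is the right instinct but is likewise not carried out.

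For comparison, the paper deliberately avoids the Sinkhorn route (it says so explicitly after the proof) and argues in an entirely elementary way: it deletes the at most $\lceil\eps n\rceil$ bad rows and columns to get an $m\times m$ matrix, multiplies each row by a scalar in $[1-2\eps,1+2\lambda\eps]$ so that every row sum is exactly $m$ (column sums then lie in $[(1-4\lambda\eps)m,(1+4\lambda\eps)m]$ and entries in $[(2\lambda)^{-1},2\lambda]$), and then repairs the column sums by a greedy pairing: take columns $\col_i,\col_j$ with $s(\col_i)<m<s(\col_j)$ and, row by row, move as much mass as possible from the $j$-entry to the $i$-entry subject to the entry bounds --- this preserves every row sum exactly, and a short case analysis shows one of the two columns reaches sum exactly $m$, so at most $m$ iterations finish the job with total $L_1$ movement $O(\eps\lambda^2 m^2)$. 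Finally it glues $nI_{n-m}$ to $\frac{n}{m}Y_m'$ to return to an $n\times n$ matrix in $n\cdot\Omega_n$. The paper's route buys a fully self-contained, finite, constant-explicit argument; your route, once backed by the Hilbert-metric contraction, is more conceptual but imports a quantitative convergence theorem that is itself harder than the lemma.
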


\begin{proof}(of Lemma \ref{lemma:L1:doubly}) We first completely truncate the same number, assuming the worse case $\lceil \eps n \rceil$, of rows and columns whose sums were not in the range $[(1- \eps)n, (1+\eps)n]$. The obtained square matrix has size  $m=n-\lceil 
  \eps n \rceil$ with row and column sums belonging to $[(1-\eps)n -\lambda \lceil \eps n \rceil), (1+\eps)n] \subset [(1-2\lambda \eps)n ,(1+\eps) n]$. We next multiply each row of the obtained matrix by an appropriate factor from the interval $[1-2\eps, 1+2\lambda \eps]$ to make the row sum exactly $m$. Let the new matrix be $Y_m=(y_{ij})_{1\le i,j\le m}$, whose properties are summarized below when $\eps$ was chosen sufficiently small:

\begin{itemize}
\item every row   $\row_i$ for $ 1\le i\le m$  has sum $s(\row_i)$ exactly $m$;
\vskip .1in
\item every column sum for  $s(\col_j), 1\le j\le m$ belongs to the range $[(1-4\lambda \eps)m, (1+4\lambda
  \eps)m]$, 
\vskip .1in
\item for all $i,j$ we have $(2\lambda)^{-1} \le y_{ij} \le 2\lambda$. 
\end{itemize}

We now approximate $Y_m$ by matrices from $m \cdot \Omega_m$.

\begin{claim}\label{claim:B_m'} There exists a matrix $Y_m' \in m \cdot \Omega_m$ such that 
$$\sum_{1\le i,j \le n}|y_{ij}-y_{ij}'| \le 4 \eps \lambda^2  m^2.$$
\end{claim}

It is clear that by the construction of $Y_m$, after gluing $n I_{n-m}$ to $\frac{n}{m}  Y_m'$ one creates a matrix $X_n' \in n \cdot \Omega_n$ which approximates $X_n$ as desired.

It  remains to prove Claim \ref{claim:B_m'}. We are going to modify the column vectors of $Y_m$ so that they all have sum
$m$. Let $\col_i$ and $\col_j$ be two columns with 

$$s(\col_i)<m <s(\col_j).$$ 

{\bf Case 1.} Assume for now that
$s(\col_i)+s(\col_j)\ge 2m$. We are going to modify the entries of
$\col_i,\col_j$ as follows: for $k=1$ to $m$, consider the pair
$(y_{ki},y_{kj})$. Increase $y_{ki}$ by a largest possible amount
$y_k\ge 0$ (and decrease $y_{kj}$ by  $y_k$ accordingly to preserve the row sum) so that $s(\col_i)$
is still below $m$ and then entries are still within $(2\lambda)^{-1} \le y_{ki}', y_{kj}' \le 2
\lambda$. 

We claim that $s(\col_i)=m$ after modifying all $(y_{ki},y_{kj}), 1\le
k\le m$. Assume otherwise, then the reason we are not able to increase
$s(\col_i)$ furthermore is that for any $1\le k\le m$, either
$y_{ki}=2\lambda$ or $y_{kj}=(2\lambda)^{-1}$. But in either case,
$y_{ki}\ge y_{kj}$, and so $s(\col_i)$ must be at least $s(\col_j)$,
a contradiction to our assumption that $s(\col_i) + s(\col_j)\ge 2m$.

{\bf Case 2.} For the case $s(\col_i)+s(\col_j)<2m$, for $k=1$ to $m$
we decrease $y_{kj}$ by a largest possible amount
$y_k\ge 0$ (and increase $y_{kj}$ by $y_k$ accordingly) so that $s(\col_j)$
is still at least $m$, and $(2\lambda)^{-1} \le y_{ki}', y_{kj}' \le 2
\lambda$. Again it is not hard to check that after modifying all $(y_{ki},y_{kj}), 1\le
k\le m$, we will obtain $s(\col_j)=m$.

Note that because of the nature of our process (as we always either increase or decrease
all entries of one column), the $L_1$-distance of the new matrix
$B_m'$ and the original matrix $Y_m$ is bounded by

$$\sum_{i,j}|y_{ij}(Y_m) - y_{ij}(Y_m')| \le 4\eps \lambda^2 m.$$

Now consider the new matrix, if it still does not belong to $m \cdot \Omega_m$
then choose any column pair $(\col_{i'},\col_{j'})$ with
$s(\col_{i'})<m<s(\col_{j'})$ and continue the modifying process as above.

After at most $m$ such iterations, our matrix must belong to $m\cdot
\Omega_m$ because the number of columns of sum $m$ increase by at least one
after each iteration.
\end{proof}

It is plausible to obtain $X_n'$ by considering the Sinkhorn decomposition of $X_n$ but we have not followed this approach. However, in our simple proof above we did use the trick of scaling the rows appropriately. 

For convenience, we gather here an immediate consequence of Lemma \ref{lemma:approx:doubly} and Lemma \ref{lemma:L1:doubly}.

\begin{corollary}\label{cor:A} There exists a matrix $A_n(\omega) =(a_{ij}(\omega))_{1\le i,j\le n}\in n \cdot \Omega_n$ such that  

$$(2\lambda)^{-1} \le a_{ij}(\omega) \le 2\lambda, \mbox{ and } \frac{1}{n^2}\sum_{1\le i,j\le n} |g(T^{(i,j)}(\omega)) - a_{ij}(\omega)| \le 16 \eps \lambda^2.$$
\end{corollary}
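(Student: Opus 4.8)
The plan is to combine the two preceding lemmas directly, taking the matrix $X_n$ of Lemma \ref{lemma:L1:doubly} to be $\Box_n g(\omega)$ itself. First, since $\lambda^{-1}\le g\le \lambda$ holds $\P$-almost everywhere, for $\P$-almost every $\omega$ every entry $g(T^{(i,j)}(\omega))$ of $\Box_n g(\omega)$ lies in $[\lambda^{-1},\lambda]$; we discard the null set where this fails. Next, let $\CE_n:=\CE_n^{(1)}\cup\CE_n^{(2)}$ be the exceptional set appearing in the proof of Lemma \ref{lemma:approx:doubly}, so that $\P(\CE_n)\le 2\eps^2$, and hence $\P(\CE_n)\le\eps$ once $\eps$ is small; for every $\omega\notin\CE_n$, Lemma \ref{lemma:approx:doubly} guarantees that all but at most $\eps n$ of the rows and columns of $\Box_n g(\omega)$ have sum in $[(1-\eps)n,(1+\eps)n]$.

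Consequently, for $\omega\notin\CE_n$ the matrix $X_n=\Box_n g(\omega)$ satisfies both hypotheses of Lemma \ref{lemma:L1:doubly} with the same $\lambda$ and $\eps$ (the latter taken small enough in terms of $\lambda$). Applying that lemma yields a matrix $X_n'\in n\cdot\Omega_n$ with $(2\lambda)^{-1}\le x_{ij}'\le 2\lambda$ for all $i,j$ and $\sum_{1\le i,j\le n}|x_{ij}-x_{ij}'|\le 16\eps\lambda^2 n^2$. Setting $A_n(\omega):=X_n'$ (and, say, $A_n(\omega):=n I_n$ for $\omega\in\CE_n$), and dividing the last inequality by $n^2$, gives precisely the two asserted bounds on $A_n(\omega)$.

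There is no real obstacle here beyond bookkeeping. The only points worth checking are that the set $\CE_n$ on which Lemma \ref{lemma:approx:doubly} can fail has measure $O(\eps^2)$, so that $A_n(\omega)$ is defined off a set of the small measure needed later in Theorem \ref{theorem:llp:approx}, and — if measurable dependence on $\omega$ is desired — that the construction in the proof of Lemma \ref{lemma:L1:doubly} is a fixed finite algorithm applied entrywise to $\Box_n g(\omega)$, hence measurable in $\omega$. Both are routine.
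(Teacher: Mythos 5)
Your proposal is correct and is exactly the paper's intended argument: the paper presents Corollary \ref{cor:A} as an immediate consequence of Lemma \ref{lemma:approx:doubly} (applied off the exceptional set $\CE_n^{(1)}\cup\CE_n^{(2)}$ of measure at most $2\eps^2$) followed by Lemma \ref{lemma:L1:doubly} applied to $X_n=\Box_n g(\omega)$. Your added remarks on the measure of the exceptional set and on measurability in $\omega$ are accurate bookkeeping that the paper leaves implicit.
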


We next apply the following nice result by Bochi, Iommi and Ponce \cite[Lemma 4.4]{BIP}.

\begin{lemma}\label{lemma:L1} If $X_n$ and $Y_n$ are matrices with $\lambda^{-1} \le x_{ij}, y_{ij} \le \lambda$, then

$$\left|\log \frac{\per(X_n)}{\per(Y_n)}\right| \le \frac{\lambda^5}{n^2} \sum_{i,j}|x_{ij}-y_{ij}|.$$ 
\end{lemma}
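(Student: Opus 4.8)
The plan is to pass from the permanental mean to the ordinary permanent. Since $\per(M_n)=(\perm(M_n)/n!)^{1/n}$, we have $\log\frac{\per(X_n)}{\per(Y_n)}=\frac1n\big(\log\perm(X_n)-\log\perm(Y_n)\big)$, so it suffices to prove
$$\big|\log\perm(X_n)-\log\perm(Y_n)\big|\le \frac{\lambda^{5}}{n}\sum_{i,j}|x_{ij}-y_{ij}|.$$
I would obtain this by linear interpolation: set $M(t):=(1-t)Y_n+tX_n$ for $t\in[0,1]$, so each entry of $M(t)$ is a convex combination of $x_{ij}$ and $y_{ij}$ and hence still lies in $[\lambda^{-1},\lambda]$; in particular all permanents below are strictly positive. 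Since $\perm$ is multilinear in the entries, $\frac{\partial}{\partial m_{ij}}\perm(M)=\perm\big(M^{(i,j)}\big)$, where $M^{(i,j)}$ is the matrix obtained by deleting row $i$ and column $j$. By the fundamental theorem of calculus and the chain rule,
$$\log\perm(X_n)-\log\perm(Y_n)=\int_0^1\sum_{i,j}(x_{ij}-y_{ij})\,\frac{\perm\big(M(t)^{(i,j)}\big)}{\perm\big(M(t)\big)}\,dt,$$
so everything reduces to the pointwise bound $\perm(M^{(i,j)})/\perm(M)\le \lambda^{O(1)}/n$ for every matrix $M$ with entries in $[\lambda^{-1},\lambda]$.

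The substance of the argument is this last estimate, which expresses the fact that the \emph{marginal} of a single entry contributes only an $O(1/n)$ fraction of the permanent. Fix $i$ and expand along row $i$: $\perm(M)=\sum_{k=1}^n m_{ik}\,\perm(M^{(i,k)})\ge \lambda^{-1}\sum_{k=1}^n\perm(M^{(i,k)})$. Hence it is enough to show that the minor permanents $\perm(M^{(i,k)})$, $k=1,\dots,n$, are pairwise comparable, namely $\perm(M^{(i,k)})\ge \lambda^{-2}\perm(M^{(i,j)})$ for all $k$: then $\perm(M)\ge \lambda^{-3}n\,\perm(M^{(i,j)})$, i.e. $\perm(M^{(i,j)})/\perm(M)\le \lambda^{3}/n\le \lambda^{5}/n$. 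To prove the comparability, fix $k\ne j$ and expand $\perm(M^{(i,j)})$ along its column indexed $k$ and $\perm(M^{(i,k)})$ along its column indexed $j$; in both expansions the coefficients are the permanents of the same family of double minors (delete rows $i,l$ and columns $j,k$), which are nonnegative, while the multiplying entries are $m_{lk}$ in the first case and $m_{lj}$ in the second, both in $[\lambda^{-1},\lambda]$. Comparing the two sums term by term gives $\perm(M^{(i,j)})\le \lambda^{2}\perm(M^{(i,k)})$.

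Assembling the pieces: for every $t\in[0,1]$ and all $i,j$ we get $\perm(M(t)^{(i,j)})/\perm(M(t))\le \lambda^{3}/n$, so the displayed integral is at most $\frac{\lambda^{3}}{n}\sum_{i,j}|x_{ij}-y_{ij}|$; dividing by $n$ yields $\big|\log\frac{\per(X_n)}{\per(Y_n)}\big|\le \frac{\lambda^{3}}{n^{2}}\sum_{i,j}|x_{ij}-y_{ij}|$, which is stronger than the stated bound (and $n=1$ is trivial). I expect the cofactor-comparability step to be the only genuinely non-routine point; the interpolation and the row expansion are bookkeeping. A discrete alternative is to change the entries of $Y_n$ into those of $X_n$ one at a time and telescope $\log\perm$ over the $n^{2}$ intermediate matrices, applying the same marginal bound at each step, but the integral form seems cleanest and avoids any lower bound on $n$.
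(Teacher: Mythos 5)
Your proof is correct. Note that the paper does not prove this lemma at all --- it is quoted verbatim from \cite[Lemma 4.4]{BIP} --- so there is no in-paper argument to compare against; what you have written is a complete, self-contained substitute for that citation. The structure (interpolate $M(t)=(1-t)Y_n+tX_n$, differentiate $\log\perm$, and reduce everything to the marginal bound $\perm(M^{(i,j)})/\perm(M)\le \lambda^{3}/n$) is the natural one, and the one non-routine step --- the pairwise comparability $\perm(M^{(i,j)})\le\lambda^{2}\perm(M^{(i,k)})$ via expanding both cofactors over the identical family of double minors $M^{(\{i,l\},\{j,k\})}$ --- is carried out correctly, since those double minors have positive entries and hence positive permanents, so the term-by-term comparison $m_{lk}\le\lambda^{2}m_{lj}$ is legitimate. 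Combined with the row expansion $\perm(M)\ge\lambda^{-1}\sum_k\perm(M^{(i,k)})$ this gives $\lambda^{3}/n$, and after integrating and dividing by $n$ you even obtain the constant $\lambda^{3}$ in place of the stated $\lambda^{5}$, which of course implies the lemma since $\lambda>1$. The only cosmetic points: you should say explicitly that the case $k=j$ of the comparability claim is trivial, and that for $n\ge 2$ the empty-matrix convention $\perm(\emptyset)=1$ handles the innermost expansion; neither affects validity.
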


By Corollary \ref{cor:A} and Lemma \ref{lemma:L1} above, to prove Theorem \ref{theorem:llp:approx} we just need to show that, with $A_n$ as in Corollary \ref{cor:A},

$$\per(A_n) = (1+o(1)).$$

This is the content of another question posed by Bochi, Iommi and Ponce. 

\begin{theorem}\label{theorem:doubly}\cite[Conjecture 6.2]{BIP}
Assume that $\{A_n=(a_{ij})\}_{1\le n<\infty}$ is a sequence of matrices of increasing size
$n$ such that $A_n \in n \cdot \Omega_n$. Assume furthermore that
there exists $\lambda>1$ so that $ \lambda^{-1} \le a_{ij}\le \lambda, 1 \le i,j \le n$. Then

$$\lim_{n\to \infty} \per(A_n)  =1.$$
\end{theorem}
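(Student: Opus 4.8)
The plan is to establish matching upper and lower bounds for $\per(A_n)$ when $A_n \in n\cdot\Omega_n$ has entries bounded in $[\lambda^{-1},\lambda]$. The lower bound $\per(A_n)\ge 1-o(1)$ follows immediately from the van der Waerden inequality \eqref{eqn:doubly:W}: since $\frac1n A_n$ is doubly stochastic, $\perm(\frac1n A_n)\ge n!/n^n$, hence $\per(\frac1n A_n)\ge 1$, and so $\per(A_n) = n\cdot\per(\frac1n A_n)^{?}$—more carefully, $\per(A_n) = (\perm(A_n)/n!)^{1/n} = n\,(\perm(\frac1n A_n)/n!)^{1/n} \cdot n^{-1}$; rescaling gives $\per(A_n)\ge 1$ directly from $\perm(A_n)\ge n^n\cdot n!/n^n = n!$. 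So the lower bound is free and the entire difficulty is the upper bound $\per(A_n)\le 1+o(1)$, i.e.\ $\perm(A_n)\le (1+o(1))^n\, n!$, equivalently $\perm(\frac1n A_n) \le (1+o(1))^n\, n!/n^n$. This says that among all bounded-entry doubly stochastic matrices, the permanent cannot exceed the van der Waerden minimum by more than a subexponential factor.

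The approach I would take for the upper bound is an entropy / Bregman–Minc-type argument, exploiting that the entries are bounded \emph{below} by $\lambda^{-1}$. Recall the Bregman–Minc inequality $\perm(B)\le \prod_i (r_i!)^{1/r_i}$ for $0$–$1$ matrices with row sums $r_i$; the relevant analogue here is the Schrijver–Soules / Gurvits-type bound, or more directly the fact that for a doubly stochastic matrix $S=(s_{ij})$ one has
$$\frac{\perm(S)}{?}$$
—better to use the clean statement: for $S$ doubly stochastic, $\perm(S)\le \prod_{i,j}(1-s_{ij})^{-(1-s_{ij})}$ is false in general, so instead I would use the van der Waerden-type \emph{upper} estimate coming from Falikman–Egorychev combined with a concavity argument. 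Concretely, consider $\log\perm(S)$ as a function on $\Omega_n$; it is known (via the Schrijver inequality or the doubly-stochastic scaling) that $\perm(S)\le \prod_{i,j} s_{ij}^{s_{ij}}\cdot(\text{correction})$. The cleanest route: use the permanent's relation to the matrix scaling / capacity. Since $S=\frac1n A_n$ is already doubly stochastic, its Sinkhorn scaling is itself, and Gurvits' bound gives $\perm(S)\ge e^{-n}$ (that is again the lower bound). For the upper bound I would instead directly estimate $\perm(S) = \E_\pi \prod_i s_{i\pi(i)} \cdot n!$ where $\pi$ is uniform, and bound $\frac1n\log\bigl(n^n\perm(S)/n!\bigr)$ by an entropy-type expansion using the boundedness $s_{ij}\in[\frac1{n\lambda},\frac{\lambda}{n}]$.

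The honest main step, and the one I expect to be the crux, is the following: show that for $S\in\Omega_n$ with $s_{ij}\in[\frac{c}{n},\frac{C}{n}]$, $\perm(S)\le e^{o(n)}\cdot e^{-n}$. I would prove this by a \emph{second-moment} or \emph{sub-permanent} reduction: partition $\{1,\dots,n\}$ into $k=O_\lambda(1)$ blocks so that within each block the entries are nearly constant (possible by the two-sided bound, dyadically grouping the values $n s_{ij}\in[\lambda^{-1},\lambda]$ into $O(\log\lambda/\log(1+\delta))$ levels), reducing to the case where $n s_{ij}$ takes finitely many values; then apply the Brualdi–Foregger / Hwang-type refinement of van der Waerden which pins the permanent near $n!/n^n$ up to an exponential factor $(1+O(\delta))^n$ whenever the matrix is within $\delta$ (in $L_1$, normalized) of $J_n$ — but our $A_n$ need \emph{not} be close to $J_n$, so this fails and I must instead use the \textbf{stability of van der Waerden}: the functional $\Phi(S)=\log\perm(S)$ is real-analytic and strictly concave along lines in $\Omega_n$ transverse to the boundary, with Hessian controlled when entries are $\ge c/n$; hence $\Phi(S)\le \Phi(J_n) + \langle\nabla\Phi(J_n), S-J_n\rangle + O_\lambda(1)\cdot\|S-J_n\|^2$. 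Here $\nabla\Phi(J_n)$ is constant on rows and columns (by symmetry) so the linear term vanishes on $\Omega_n$, and $\|S-J_n\|_2^2\le \lambda^2/n$, giving $\Phi(S)\le \Phi(J_n)+O_\lambda(1)/n = \log(n!/n^n)+o(1)$, i.e.\ $\per(S\cdot n) = \per(A_n)\le 1+o(1)$. Making the Hessian bound precise — i.e.\ proving $\log\perm$ is uniformly concave with constant $O_\lambda(n)$ on the region of $\Omega_n$ with entries $\ge \lambda^{-1}/n$ — is the real work; it can be done via the formula $\partial^2_{s_{ij}}\log\perm = -(\perm_{ij}/\perm)^2 + (\text{off-diagonal terms})$ where $\perm_{ij}$ is the $(i,j)$-minor permanent, together with the estimate $\perm_{ij}/\perm \asymp_\lambda 1/n$ that itself follows from the two-sided entry bound. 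I would carry out: (1) the trivial lower bound; (2) the reduction to proving $\Phi(S)\le\Phi(J_n)+o(n)$; (3) the second-order Taylor expansion of $\Phi$ on $\Omega_n$; (4) the minor-permanent ratio estimate feeding the Hessian bound; (5) assembling (3)+(4) to finish. Step (4) is the main obstacle.
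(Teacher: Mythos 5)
Your lower bound is fine and identical to the paper's (it is immediate from the van der Waerden inequality \eqref{eqn:doubly:W}), but the upper bound --- which you correctly identify as the entire content of the theorem --- rests on a claim that is false. You assert that $\Phi=\log\perm$ is concave along lines in $\Omega_n$ and that, since the tangential gradient at $J_n$ vanishes, $\Phi(S)\le\Phi(J_n)+O_\lambda(1)\|S-J_n\|^2$. But $J_n$ is the \emph{minimizer} of the permanent on $\Omega_n$ (Egorychev--Falikman), and the tangential gradient does vanish there; if $\Phi$ were concave this would force $J_n$ to be a maximizer, contradicting the very inequality you use for your lower bound. The tangential Hessian at $J_n$ is in fact positive semidefinite, and a direct expansion of $\perm\bigl(\tfrac1n(J+\delta E)\bigr)$ with $E$ having zero row and column sums and entries of order $1$ shows the second-order correction to $\E_\pi\prod_i(ns_{i\pi(i)})$ is $+\tfrac{1}{2n(n-1)}\sum_{i,k}\epsilon_{ik}^2=\Theta(\delta^2)$, a positive \emph{constant}. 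Relatedly, $\|S-J_n\|_2^2$ is $\Theta_\lambda(1)$, not $O(1/n)$: there are $n^2$ entries each deviating by up to $(\lambda-1)/n$. So even with a correct two-sided Hessian bound your expansion could only hope to give $\Phi(S)\le\Phi(J_n)+O_\lambda(1)$, and the ingredient needed to get even that --- your step (4), the uniform estimate $\perm_{ij}/\perm\asymp_\lambda 1/n$ along the whole segment from $J_n$ to $S$ --- is left unproven and is essentially as deep as the theorem itself. As written, the argument both proves too much (a $1+O(1/n)$ pinning of the permanent at the van der Waerden minimum, which the example above refutes) and proves nothing (its key analytic input is missing).

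The paper's route is far more elementary and you actually brushed past it: it proves the explicit bound $\perm(A_n)\le e^{2\lambda}n^{(\lambda-1)/2}\,n!$ using \emph{only} row-stochasticity and the upper bound $a_{ij}\le\lambda$ (the lower bound $a_{ij}\ge\lambda^{-1}$ is not needed for the upper estimate). One takes a permanent-maximizing matrix in the constraint set and observes, via the Laplace expansion, that the permanent is linear in each entry of a fixed row, so mass can be shifted between two non-extreme entries toward the endpoints $\{0,\lambda\}$ without decreasing the permanent; iterating, each row of an extremizer may be assumed to have all entries in $\{0,\lambda\}$ save one, which is then rounded up to $\lambda$. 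Dividing by $\lambda$ yields a $\{0,1\}$ matrix with row sums $m=\lceil n/\lambda\rceil$, and the Bregman--Minc inequality $\perm(B_n)\le(m!)^{n/m}$ plus Stirling finishes. If you want to salvage your write-up, I would abandon the Taylor expansion around $J_n$ entirely and develop the Bregman--Minc idea you mention (and discard) in your second paragraph along these extremal lines.
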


Clearly, by the van der Waerden bound \eqref{eqn:doubly:W}, for Theorem \ref{theorem:doubly} the main task is to bound $\per(A_n)$ from above. In the next section we will give a simple proof of it, and hence concluding the proof of Theorem \ref{theorem:llp}.

\section{Proof of Theorem \ref{theorem:llp}: permanent of doubly stochastic matrices of bounded entries}\label{section:stochastic}

Let $\CS_n$ denote the set of (row) stochastic matrices
$M_n=(m_{ij})_{1\le i,j\le n}$ of size
$n$, that is $0\le m_{ij} \le 1$ and $\sum_{j=1}^n m_{ij} =1 $ for every $1\le i\le n$. The following result  immediately implies Theorem \ref{theorem:doubly}.

\vskip .1in
\begin{theorem}\label{theorem:stochastic}
Suppose that $\{A_n\}_{1\le n<\infty}$ is a sequence of matrices of increasing size
$n$ such that $A_n \in n \cdot \CS_n$. Suppose furthermore that
there exists $\lambda>1$ so that $a_{ij} \le \lambda$ for all
$i,j$. Then
$$\perm(A_n) \le  e^{2\lambda} n^{(\lambda-1)/2} n!.$$
\end{theorem}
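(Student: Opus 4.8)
The plan is to prove this upper bound by a two-step reduction: an extreme-point (convexity) argument that replaces $A_n$ by a highly structured matrix, followed by the classical Bregman--Minc permanent inequality and a Stirling estimate.

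First I would note that the set $K_n$ of $n\times n$ matrices with every row sum equal to $n$ and every entry in $[0,\lambda]$ is a compact polytope --- indeed the $n$-fold product of the polytope $P:=\{x\in\R^n:\sum_j x_j=n,\ 0\le x_j\le\lambda\}$, one factor for each row. Since $\perm$, expanded along a single row, is an affine function of the entries of that row when the remaining rows are held fixed, replacing each row in turn by an optimal vertex of $P$ shows that $\perm$ attains its maximum over $K_n$ at a matrix $A^*$ every row of which is a vertex of $P$. A vertex of $P$ has at most one coordinate outside $\{0,\lambda\}$, so (counting the coordinates equal to $\lambda$) each row of $A^*$ has at most $d:=\lfloor n/\lambda\rfloor+1$ nonzero entries --- and at least one, since the row sum $n$ is positive --- while still $0\le a^*_{ij}\le\lambda$. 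In particular $\perm(A_n)\le\perm(A^*)$.

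Next, since every nonzero entry of $A^*$ is at most $\lambda$, for each permutation $\pi$ the product $\prod_i a^*_{i\pi(i)}$ is either $0$ or at most $\lambda^n$, whence $\perm(A^*)\le\lambda^n\perm(B)$, where $B$ is the $0/1$ support matrix of $A^*$, with row sums $r_i$ satisfying $1\le r_i\le d$. By the Bregman--Minc theorem, $\perm(B)\le\prod_{i=1}^n(r_i!)^{1/r_i}$, and since $r\mapsto(r!)^{1/r}$ is nondecreasing (equivalently $r!\le(r+1)^r$) this is at most $(d!)^{n/d}$. Thus $\perm(A_n)\le\lambda^n(d!)^{n/d}$, and it remains to check $\lambda^n(d!)^{n/d}\le e^{2\lambda}n^{(\lambda-1)/2}n!$. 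For this I would use $d!\le e\sqrt d\,(d/e)^d$ together with $\lambda d\le n+\lambda$, $n/d<\lambda$, and $d\le n$ (the last because $\lambda>1$): these give $(\lambda d)^n\le n^n(1+\lambda/n)^n\le e^\lambda n^n$, $e^{n/d}\le e^\lambda$, and $d^{n/(2d)}=e^{(n\log d)/(2d)}\le n^{(n/d)/2}\le n^{\lambda/2}$, so that $\lambda^n(d!)^{n/d}\le e^{2\lambda}n^{\lambda/2}\cdot n^n e^{-n}\le e^{2\lambda}n^{\lambda/2}\cdot n!/\sqrt n=e^{2\lambda}n^{(\lambda-1)/2}n!$, using $n!\ge\sqrt{2\pi n}\,(n/e)^n$ at the last step.

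The conceptual heart is the first step: multilinearity of $\perm$ in the rows lets one trade the original matrix for an essentially $0/1$ matrix with row sums about $n/\lambda$, at the cost of a factor $\lambda^n$ that is precisely absorbed against the $e^{-n}$ saving built into the Bregman--Minc bound. I expect the only genuinely delicate point to be the final Stirling bookkeeping: one must keep the $\sqrt{2\pi m}$ factors in Stirling's formula (dropping them yields a constant that is not uniform in $\lambda$) and use the estimate $d^{n/(2d)}\le n^{\lambda/2}$, which is what manufactures the exponent $(\lambda-1)/2$.
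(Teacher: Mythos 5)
Your proof is correct and follows essentially the same route as the paper: reduce by (multi)linearity of the permanent in the rows to a matrix whose rows have all entries in $\{0,\lambda\}$ up to one exception (the paper does this via an explicit pairwise ``shifting'' argument, you via vertices of the row polytope --- the same idea), then apply Bregman--Minc to the $\{0,1\}$ support matrix and finish with Stirling. The only cosmetic difference is that the paper rounds the exceptional entry up to $\lambda$ to make all row sums of the support matrix equal to $\lceil n/\lambda\rceil$, whereas you keep unequal row sums $r_i\le d$ and invoke monotonicity of $(r!)^{1/r}$; both yield the stated bound.
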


It remains to prove Theorem \ref{theorem:stochastic}. Denote  $\Ext_n$  the collection of matrices $A_n=(a_{ij})$ where $A_n \in n \cdot
\CS_n, a_{ij}\le \lambda$ and $\perm(A_n)$ is maximum. Note that $\Ext_n$ is non-empty because of
 the compactness.

\begin{lemma}\label{lemma:shifting} The family $\Ext_n$ contains of  a matrix $A_n=(a_{ij})_{1\le i,j\le
    n}$ with the following properties: for every row $1\le i\le n$, all but at most one
  entry  take values either zero or $\lambda$,
  and the (possible)  remaining entry is strictly between zero and $\lambda$.
\end{lemma}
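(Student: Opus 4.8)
The plan is a standard variational (mass-shifting) argument, exploiting that the permanent is a \emph{linear} function of the entries of any single row. For a matrix $A_n=(a_{ij})$ and a fixed row $i$, write
$$\perm(A_n) = \sum_{j=1}^n a_{ij}\, c_j, \qquad c_j := \perm(A_n^{(i,j)}),$$
where $A_n^{(i,j)}$ is the $(n-1)\times(n-1)$ matrix obtained by deleting row $i$ and column $j$; note $c_j\ge 0$ since all entries are non-negative.

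First I would fix an arbitrary $A_n\in\Ext_n$ and a row $i$, and produce another element of $\Ext_n$ agreeing with $A_n$ outside row $i$ whose $i$-th row has at most one entry in the open interval $(0,\lambda)$. Suppose row $i$ has two distinct columns $k\ne l$ with $a_{ik},a_{il}\in(0,\lambda)$; without loss of generality $c_k\ge c_l$. Put $\delta := \min\{\lambda-a_{ik},\, a_{il}\}>0$, and replace $a_{ik}$ by $a_{ik}+\delta$ and $a_{il}$ by $a_{il}-\delta$, leaving all other entries unchanged. The row sum of row $i$ is still $n$ and all entries stay in $[0,\lambda]$, so the new matrix $A_n'$ still lies in $n\cdot\CS_n$ with entries at most $\lambda$ (recall $\CS_n$ imposes no column constraints, so only the row-sum and box constraints matter, and both are preserved). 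Moreover $\perm(A_n')-\perm(A_n) = \delta(c_k-c_l)\ge 0$, so $A_n'$ is again a maximizer, i.e.\ $A_n'\in\Ext_n$. By the choice of $\delta$, either $a_{ik}$ has been pushed up to $\lambda$ or $a_{il}$ down to $0$, so the number of entries of row $i$ strictly between $0$ and $\lambda$ has strictly decreased. Since a row has only $n$ entries, iterating this finitely many times yields an element of $\Ext_n$, still agreeing with $A_n$ outside row $i$, whose $i$-th row has at most one entry in $(0,\lambda)$; any such remaining entry is then by definition strictly between $0$ and $\lambda$.

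Then I would apply this procedure successively for $i=1,2,\dots,n$. Because each step modifies only the current row, a row already brought into the desired form is untouched by later steps, and linearity in each later row is unaffected by changing earlier rows. After the $n$-th step we obtain a matrix in $\Ext_n$ all of whose rows have at most one entry strictly between $0$ and $\lambda$ and every other entry equal to $0$ or $\lambda$, which is precisely the claim. (Equivalently, one may phrase the single-row step as: with the other rows fixed, row $i$ maximizes the linear functional $x\mapsto\sum_j x_j c_j$ over the polytope $\{x\in[0,\lambda]^n:\sum_j x_j=n\}$, whose vertices each have at most one coordinate in $(0,\lambda)$, and the face of maximizers of a linear functional over a polytope contains a vertex.)

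I do not expect a genuine obstacle here. The only points needing (easy) care are that the shift keeps the matrix inside $n\cdot\CS_n\cap\{a_{ij}\le\lambda\}$ — which is immediate since row sums and box constraints are preserved and there are no column constraints — and that the process terminates, which it does since each iteration removes at least one entry from $(0,\lambda)$. The existence of a starting maximizer is the compactness fact already noted, and nonemptiness of the feasible set holds because $\lambda>1$ (e.g.\ the all-ones matrix lies in it).
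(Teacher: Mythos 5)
Your argument is correct and is essentially identical to the paper's: both exploit linearity of the permanent in a single row via the Laplace expansion and shift mass between two interior entries toward the one with the larger cofactor until one of them hits $0$ or $\lambda$, then iterate over entries and rows. Your choice $\delta=\min\{\lambda-a_{ik},a_{il}\}$ reproduces exactly the paper's two-case update $(0,a_{11}+a_{12})$ versus $(a_{11}+a_{12}-\lambda,\lambda)$, so there is nothing further to add.
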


\begin{proof}(of Lemma \ref{lemma:shifting}) It suffices to work with
  the first row. We show that for any $A_n\in \Ext_n$, there is a way
  to force all but at most one of the entries $a_{11},\dots, a_{1n}$
  of its first row to be either zero or $\lambda$, while keeping
  $\perm(A_n)$ to be optimal. 

In what follows we freeze all $a_{ij}$ with $2\le i\le n, 1\le j\le
n$. For $1\le
  i\le n$, let $M_i$ be to $(n-1)\times (n-1)$ minor obtained from
  $A_n$ by deleting its first row and $i$-th column. By the Laplace expansion

$$\perm(A_n) = \sum_{i=1}^n a_{1i} \perm(M_i).$$

Assume that there are two entries among the $a_{11},\dots,
a_{1n}$ that are not either zero or $\lambda$. Without loss of
generality, assume that these are $a_{11}$ and $a_{12}$. Assume
furthermore that 

$$0\le \perm(M_1) \le \perm(M_2).$$

Given the constraint $a_{11}+a_{12} = n -
\sum_{3\le i\le n} a_{1i}$ and $0\le a_{11},a_{12} \le \lambda$
(with temporarily fixed $a_{1i}, 3\le i\le n$), it is easy to see that
the sum
$a_{11}\perm(M_1)+a_{12} \perm(M_2)$ is bounded from above by 

$$
a_{11} \perm(M_1)+a_{12} \perm(M_2) \le 
\begin{cases}
0 \times \perm(M_1) + (a_{11}+a_{12})  \perm(M_2), & a_{11}+a_{12} \le \lambda \\
(a_{11}+a_{12}-\lambda)  \perm(M_1) + \lambda  \perm(M_2),
& \lambda<a_{11}+a_{12} < 2 \lambda.
\end{cases}
$$

In other words, we do not decrease $\perm(A_n)$ by shifting
$(a_{11},a_{12},a_{13},\dots, a_{1n})$ to either
$(0,a_{11}+a_{12},a_{13},\dots, a_{1n})$ or $(a_{11}+a_{12}-\lambda,
\lambda,a_{13},\dots, a_{1n})$. Remark that in either case we increase the
number of entries taking values zero or $\lambda$  in the first row of $A_n$.
\end{proof}

Let $A_n\in \Ext_n$ be a matrix obtained by applying Lemma
\ref{lemma:shifting}. After replacing in each row the exceptional
entry by $\lambda$ (if needed), we obtain a
matrix $A_n'$ whose each row contains $m=\lceil \frac{n}{\lambda}
\rceil$ entries of value $\lambda$ and  $n-m$ entries of  value
zero. Also  $\perm(A_n) \le \perm(A_n')$. 

After scaling down the entries of $A_n'$ by a factor of $\lambda$ (and so the permanent is scaled
down by a factor of $\lambda^n$), we obtain a
$\{0,1\}$-matrix $B_{n}$ where the number of ones in each
row is exactly $m$. We next apply  Bregman-Minc inequality for
  permanent of $\{0,1\}$ matrices (see for instance \cite{B,M} or \cite[(2.1)]{AF})

\begin{theorem}\label{theorem:BM} Let $M_n=(m_{ij})_{1\le i,j\le n}$ be a $\{0,1\}$-matrix. Denote $r_i= \sum_{j=1}^n m_{ij}, 1\le i\le n$. Then we have
$$\perm(M_n) \le \prod_{i=1}^n (r_i!)^{\frac{1}{r_i}},$$
where equality holds if and only if up to permutation of rows and
columns $M_n$ is a block diagonal matrix where each block is a square
matrix of all ones. In other words, equality holds only when $M_n$ is an adjacency matrix
of disjoint union of bipartite $K_{r_i,r_i}$ graphs.
\end{theorem}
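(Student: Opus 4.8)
The plan is to establish the Bregman--Minc inequality by the entropy-counting argument, and then to read off the equality case from the places where the entropy bounds are tight. We may assume $\perm(M_n)\ge 1$, since if $\perm(M_n)=0$ the inequality is trivial (each factor $(r_i!)^{1/r_i}$ is at least $1$, with the usual convention that a zero row contributes the factor $1$); in particular we may assume $r_i\ge 1$ for all $i$. Let $\pi$ be chosen uniformly at random from the nonempty finite set of permutations $\sigma\in S_n$ with $m_{i\sigma(i)}=1$ for all $i$, and set $X_i:=\pi(i)$. Writing $H$ for the Shannon entropy taken with the natural logarithm, we have $H(X_1,\dots,X_n)=\log\perm(M_n)$. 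Independently of $\pi$, draw a uniformly random linear order $\tau$ of $\{1,\dots,n\}$, and for each $i$ let $S_i=S_i(\tau)$ be the set of indices preceding $i$ in $\tau$.

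For the inequality itself, I would argue as follows. For each fixed order $\tau$ the chain rule gives
$$H(X_1,\dots,X_n)=\sum_{i=1}^n H\!\left(X_i \,\middle|\, (X_{i'})_{i'\in S_i}\right).$$
Given $\pi$ and $\tau$, let $N_i$ be the number of columns $c$ with $m_{ic}=1$ that do not occur among $\{\pi(i'):i'\in S_i\}$, i.e. the number of columns still available to row $i$ when it is processed. Since $S_i$ is fixed once $\tau$ is fixed, $N_i$ is a function of $(X_{i'})_{i'\in S_i}$, and the conditional law of $X_i$ given $(X_{i'})_{i'\in S_i}$ is supported on a subset of these $N_i$ columns, so the standard bound ``entropy $\le$ log of support size'' yields $H\!\left(X_i \,\middle|\, (X_{i'})_{i'\in S_i}\right)\le \E[\log N_i]$. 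Now I compute the law of $N_i$ conditionally on $\pi$: among the $r_i$ columns $c$ with $m_{ic}=1$, one of them is $\pi(i)$, which is available regardless of $\tau$, while the other $r_i-1$ columns are of the form $\pi(i')$ for $r_i-1$ distinct indices $i'\neq i$; in the uniformly random order $\tau$ the rank of $i$ among itself and these $r_i-1$ indices is uniform in $\{1,\dots,r_i\}$, and if this rank is $s$ then exactly $s-1$ of those columns are already used, so $N_i=r_i-s+1$. Hence, for every $\pi$, conditionally $N_i$ is uniform on $\{1,\dots,r_i\}$, so $\E_{\pi,\tau}[\log N_i]=\frac1{r_i}\sum_{s=1}^{r_i}\log s=\log\big((r_i!)^{1/r_i}\big)$. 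Averaging the chain-rule identity over $\tau$ and summing the bounds,
$$\log\perm(M_n)=H(X_1,\dots,X_n)=\E_\tau\sum_{i=1}^n H\!\left(X_i \,\middle|\, (X_{i'})_{i'\in S_i}\right)\le \sum_{i=1}^n\E_{\pi,\tau}[\log N_i]=\log\prod_{i=1}^n (r_i!)^{1/r_i},$$
which is the claimed inequality.

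For the equality characterization, I would first observe that both $\perm$ and $\prod_i(r_i!)^{1/r_i}$ are multiplicative over the connected components of the bipartite graph $G$ whose vertex classes are the rows and the columns and which has an edge for each entry equal to $1$; since $\perm(M_n)>0$ every component is balanced and carries a perfect matching, so it suffices to treat a connected $G$ with $k$ rows and $k$ columns and show that equality forces $G=K_{k,k}$, i.e. that block is all-ones. Equality in the inequality above forces, for \emph{every} order $\tau$, equality in each bound $H(X_i\mid\cdots)\le\E[\log N_i]$, which means that at every stage the conditional law of $X_i$ is \emph{uniform} on all $N_i$ available columns and that every available column extends to a full matching. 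Processing a chosen row first shows its neighbourhood columns are matched equally often; peeling off that row together with an arbitrary matched column and inducting on $k$ propagates the uniformity, and for a connected $G$ this is consistent only with every row having full support, i.e. $G=K_{k,k}$. Conversely, for a disjoint union of all-ones square blocks of sizes $k_1,k_2,\dots$ one checks blockwise that $\perm=\prod_t k_t!=\prod_t\big((k_t!)^{1/k_t}\big)^{k_t}=\prod_i (r_i!)^{1/r_i}$, so equality indeed holds exactly in the stated family.

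The inequality part is short and robust. The step I expect to be the main obstacle is the equality analysis: one must extract, from simultaneous tightness of the entropy bounds over all orderings $\tau$, the global structural statement that $G$ is a disjoint union of balanced complete bipartite graphs, and doing this cleanly requires the inductive peeling argument plus a little care to rule out every connected graph other than $K_{k,k}$.
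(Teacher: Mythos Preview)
The paper does not prove this theorem at all: it is quoted as the classical Bregman--Minc inequality with references to \cite{B,M,AF} and then applied. So there is no ``paper's proof'' to compare against; your task was really to supply a proof of a cited result.

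Your proof of the inequality is correct. It is Radhakrishnan's entropy argument: chain rule in a random order, the support bound $H(X_i\mid\cdots)\le \E[\log N_i]$, and the observation that for fixed $\pi$ the rank of $i$ among the $r_i$ relevant rows is uniform, so $N_i$ is uniform on $\{1,\dots,r_i\}$. All steps are sound.

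The equality analysis, however, has a genuine gap. You reduce to a connected component and propose to ``peel off'' a row $i$ and a neighbouring column $c$ and induct on $k$. Two things go wrong. First, you assert without argument that the minor $M^{(i,c)}$ inherits the equality property; this is true (conditioning on $X_i=c$ and using orderings that start with $i$ shows the uniform-at-every-stage property passes to the minor), but it needs to be said. Second, and more seriously, $M^{(i,c)}$ need not be \emph{connected}, so your induction hypothesis (stated only for connected $G$) does not apply to it, and you give no argument for how the block structure of $M^{(i,c)}$ forces $M$ itself to be $K_{k,k}$. The sentence ``for a connected $G$ this is consistent only with every row having full support'' is exactly the statement to be proved.

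A clean way to close the gap, staying within your framework: from equality you know that processing any row first makes $X_i$ uniform on $N(i)$, and processing any two rows $i,j$ first (in either order) makes the \emph{pair} $(X_i,X_j)$ computable two ways. Equating $P(X_j=c)$ from the order $(i,j)$ with the value $1/r_j$ from the order $(j,i)$, for a common neighbour $c\in N(i)\cap N(j)$, gives after a short computation $|N(i)\cap N(j)|=r_j$; by symmetry $N(i)=N(j)$. Thus any two rows sharing a neighbour have identical neighbourhoods, and by connectivity all rows have the same neighbourhood, which must be all $k$ columns since $G$ has a perfect matching. This yields $G=K_{k,k}$ directly, with no induction and no connectivity issues for minors.
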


Applying Theorem \ref{theorem:BM} to $B_n$ we obtain
 \begin{equation}\label{eqn:B_n}
\perm(B_{n}) \le (m!)^{n/m},
\end{equation}

with equality holds only when $B_n$ is an adjacency matrix
of disjoint union of $K_{m,m}$ graphs.

\begin{proof}(of Theorem \ref{theorem:stochastic}) We have
$$\perm(A_n) \le \perm(A_n')= \lambda^n \perm(B_{n}) \le (\lambda)^n  (m!)^{n/m}.$$
Note that by Stirling's approximation  $\sqrt{2 \pi k} \left(\frac{k}{e}\right)^k \le k!
\le e \sqrt{k}   \left(\frac{k}{e}\right)^k$. Thus,
\begin{align*}
\perm(A_n) \le \lambda^n (m!)^{n/m} &\le  \lambda^n \left(e
\sqrt{m}   \left(\frac{m}{e}\right)^m\right)^{n/m} \le (e \sqrt{m})^\lambda
\left(\frac{n+\lambda}{e}\right)^n\\
&\le  (e \sqrt{m})^\lambda e^{\lambda} \left(\frac{n}{e}\right)^n  \le e^{2\lambda} n^{(\lambda-1)/2} n!.
\end{align*}

\end{proof}

{\it Added to proof.} After the proof of Theorem \ref{theorem:stochastic} was written, we were informed that the result follows from \cite{S}. However, as our proof looks short and direct, we decided to keep it here for completeness.

\section{Further remarks} 
A crucial problem is to calculate the scaling mean $\sm_{\CA_1,\CA_2}(f)$ for various natural candidates of $T$ and $f$. We refer the reader to \cite{BIP} and \cite{BIP:matching} for many illuminating examples as well as for a fast and simple iterative process regarding this issue. 

Theorem \ref{theorem:llp:BIP} and Theorem \ref{theorem:llp} can be considered as law of large number. It remains an interesting problem to extend these results to central limit theorem for the logarithmic permanents. We hope to address this challenging issue in the near future. In what follows we gather two small applications of our main result.

\subsection{Determinant of gaussian matrices with different variances}
By taking advantage of the explicit approximation
$n!$ of $\perm(A_n)$ in Theorem \ref{theorem:stochastic}, we deduce that the logarithmic of square determinant of random gaussian
matrices with variance profile $A_n$ is concentrated around $\log
n!$. This is by no means fundamental, but we have not found similar
results in the literature.

Let $A_n=(a_{ij})_{1\le i,j\le n}$ be a deterministic matrix with
 non-zero entries. Assume that $X_n=(x_{ij})_{1\le i,j\le n}$ is a random matrix where
 the entries $x_{ij}$ are of the form $x_{ij}
 =\sqrt{a_{ij}}g_{ij}$, with $g_{ij}$ being independent  and identically distributed standard
 gaussian random variable. Theorem~\ref{theorem:stochastic} then yields the following statement.

\begin{corollary}\label{cor:det}Let $\eps>0$ be a constant, and let $\lambda<n$ be a parameter that might depend on $n$. Let the matrix $A_n=(a_{ij})_{1\le i,j
  \le n}\in n \cdot \Omega_n$  be such that
$\eps \le a_{ij} \le \lambda$ for every $1\le i,j\le n$. Then with high probability $\log \dt(X_n)^2$ is concentrated around $\log n!$. More precisely, there exist constants $c_0=c_0(\eps), c_1=c_1(\eps)$ and $c_2=c_2(\eps)$ such that
$$\P\Big(|\log \dt(X_n)^2 -\log n!| \ge c_0 \sqrt{\lambda n} \log^{c_1}n\Big) \le \exp(-c_2 \log^4n).$$
\end{corollary}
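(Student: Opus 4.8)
The plan is to prove Corollary~\ref{cor:det} by combining two facts: a deterministic identity relating $\E \dt(X_n)^2$ (really its expectation as a random permanent-like quantity) to $\perm(A_n)$, and a concentration inequality for $\log \dt(X_n)^2$ around its typical value. First I would recall that for a gaussian matrix with independent entries $x_{ij} = \sqrt{a_{ij}}\, g_{ij}$, one has $\E \dt(X_n)^2 = \perm(A_n)$; indeed, writing $\dt(X_n)^2 = \sum_{\pi,\sigma} \operatorname{sgn}(\pi)\operatorname{sgn}(\sigma) \prod_i x_{i\pi(i)} x_{i\sigma(i)}$ and taking expectations, only the diagonal terms $\pi = \sigma$ survive (since $\E g_{ij}^2 = 1$ and $\E g_{ij} = 0$), leaving $\sum_\pi \prod_i a_{i\pi(i)} = \perm(A_n)$. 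By Theorem~\ref{theorem:stochastic} (and the van der Waerden lower bound), $n! \le \perm(A_n) \le e^{2\lambda} n^{(\lambda-1)/2}\, n!$, so $\log \E \dt(X_n)^2 = \log n! + O(\lambda \log n)$. This pins down the expectation to the correct order, but of course the concentration of $\log \dt(X_n)^2$ is about the log of a typical value, not the log of the mean, so the expectation bound alone is not enough.

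Next I would obtain concentration directly. The cleanest route is to write $\dt(X_n)^2 = \prod_{k=1}^n \dist(R_k, V_{k-1})^2$, where $R_k$ is the $k$-th row of $X_n$ and $V_{k-1}$ is the span of the first $k-1$ rows (the base-times-height formula), or equivalently via the Gram--Schmidt / QR decomposition. Conditioning on $V_{k-1}$, the squared distance $\dist(R_k, V_{k-1})^2$ is a quadratic form $R_k^T P R_k$ in the gaussian vector $R_k$ with covariance $\diag(a_{k1},\dots,a_{kn})$, where $P$ is the rank-$(n-k+1)$ orthogonal projection onto $V_{k-1}^\perp$. Its conditional expectation is $\sum_j a_{kj} P_{jj} \approx (n-k+1)\cdot(\text{row average})$, and since the row sum is exactly $n$ (as $A_n \in n\cdot\Omega_n$) and each $a_{kj}\in[\eps,\lambda]$, this is comparable to $n-k+1$ up to constants depending on $\eps$ and $\lambda$. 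Then I would apply a Hanson--Wright-type deviation bound for gaussian quadratic forms, take logarithms, and control $\sum_k \log \dist(R_k,V_{k-1})^2$; summing the per-coordinate fluctuations and using a union bound over $k$ should give concentration of $\log \dt(X_n)^2$ around $\sum_k \log(\E[\dist(R_k,V_{k-1})^2 \mid V_{k-1}])$, which one then reconciles with $\log n!$ up to the stated error $O(\sqrt{\lambda n}\,\log^{c_1} n)$ using the bounds on $\perm(A_n)$ from Theorem~\ref{theorem:stochastic}.

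An alternative, and perhaps more robust, route for the concentration step is to use the Gaussian logarithmic Sobolev / concentration-of-measure machinery applied to the function $X \mapsto \log|\dt(X_n)|$. This function is not globally Lipschitz, so I would first restrict to the high-probability event that the least singular value of $X_n$ is not too small (a quantitative invertibility estimate, available since the smallest variance is bounded below by $\eps$) and that the operator norm is $O(\sqrt{\lambda n})$; on that event the gradient of $\log|\dt|$ is controlled, and Gaussian concentration yields subgaussian-type tails with the right scaling. Combining either concentration route with the mean computation and the two-sided permanent bound from Theorem~\ref{theorem:stochastic} finishes the proof.

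The main obstacle I anticipate is the concentration step, not the mean computation: $\log\dt(X_n)^2$ is a delicate functional because it blows up near the (low-dimensional) set of singular matrices, so the argument must either localize to an event of invertibility with good quantitative control — using that $a_{ij}\ge\eps$ keeps the entries genuinely gaussian and the matrix well-conditioned — or carefully track the telescoping product of squared distances with per-step Hanson--Wright bounds and ensure the accumulated error stays at the level $\sqrt{\lambda n}\,\mathrm{polylog}(n)$ rather than degrading to $\lambda\sqrt{n}$ or worse. Matching the precise exponents $\log^{c_1} n$ and $\log^4 n$ in the statement will require being somewhat careful about the tail bound one invokes at each of the $n$ steps and the union bound over them.
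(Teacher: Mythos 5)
Your overall architecture is exactly the paper's: establish $\E\,\dt(X_n)^2=\perm(A_n)$, note that Theorem~\ref{theorem:stochastic} together with the van der Waerden bound gives $\log n!\le\log\perm(A_n)\le\log n!+2\lambda+\tfrac{\lambda-1}{2}\log n$ (an error absorbed into $c_0\sqrt{\lambda n}\log^{c_1}n$ since $\lambda<n$), and then invoke a concentration statement for $\log\dt(X_n)^2$ around $\log\perm(A_n)$. The difference is that the paper does not prove the concentration step at all: it imports it verbatim as a special case of Rudelson--Zeitouni \cite[Corollary 1.5]{RZ}, and the entire content of the corollary is that citation plus Theorem~\ref{theorem:stochastic}. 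You instead sketch a self-contained proof of the concentration, and that sketch has a genuine gap.

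The gap is the centering problem, in both of your routes. In the Gram--Schmidt route, Hanson--Wright plus a martingale argument would indeed concentrate $\sum_k\log\dist(R_k,V_{k-1})^2$ around $\sum_k\E\bigl[\log\dist(R_k,V_{k-1})^2\mid V_{k-1}\bigr]$, but this centering quantity is random: conditionally, $\E[\dist(R_k,V_{k-1})^2\mid V_{k-1}]=\sum_j a_{kj}P_{jj}$ with $P$ the projection onto $V_{k-1}^{\perp}$, and without rotation invariance all you know is that this lies in $[\eps(n-k+1),\lambda(n-k+1)]$. A multiplicative constant per step accumulates to an additive error of order $n$ in the logarithm, vastly exceeding $\sqrt{\lambda n}\,\mathrm{polylog}\,n$; reconciling the random centering with $\log n!$ is precisely the hard part, and the paper explicitly warns that the Goodman-type identification of the $\dist^2$ laws ``totally breaks down'' once the variances are inhomogeneous. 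In the log-Sobolev route, Gaussian concentration (after a nontrivial Lipschitz extension off the good event) gives concentration around the \emph{median} of $\log\dt(X_n)^2$, not around $\log\E\,\dt(X_n)^2$; bounding that Jensen-type gap by $\sqrt{\lambda n}\,\mathrm{polylog}\,n$ is again the substance of \cite{FRZ,RZ} and is not supplied by your sketch. If you replace your concentration sketch by the citation to \cite[Corollary 1.5]{RZ}, your argument coincides with the paper's and is complete; as written, the key lemma is asserted rather than proved.
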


Note that when $a_{ij}=1$ (i.e. Ginibre ensemble), Corollary
\ref{cor:det} (or its stronger form) can be deduced directly from the observation of
Goodman \cite{G} that $\dt(X_n)^2 =\prod_{i=1}^n d_i^2$
(where $d_i$ is the distance from the $i$-th row to the subspace generated by the last $n-i$ rows) and that the $d_i^2$ are independent $\chi^2$
 of parameter $i$ thanks to the invariance property of gaussian vectors. However, when the $a_{ij}$'s are not necessarily the same as in Corollary \ref{cor:det}, this invariance property totally breaks down. 

On the other hand, one can still establish concentration for $\log \dt(X_n)^2$ by spectral mean, namely by using the result of Guionnet and Zeitouni \cite{GZ} on the concentration of linear statistics $\sum_i f(s_i)$, where $s_1\ge \ldots \ge s_n$ are the singular values of $X_n$. Although in our case $f(x)=\log x$ blows up at zero and infinity, one can still remove the singularity by truncation and by showing that the last few singular values are bounded away from zero with high probability (treatment for the soft edge is more standard). Such an approach can be found for instance in the work of Friedland, Rider and Zeitouni \cite{FRZ}; we also refer the reader to \cite{Ba,CV,RZ} and the references therein. By this concentration phenomenon, and by the fact that $\E  \dt(X_n)^2 = \perm(A_n)$, one can establish (for a wide range of $A_n$) that with high probability $\log \dt(X_n)^2$ is concentrated around $\log \perm(A_n)$. This was indeed the motivation for the Barvinok-Godsil-Gutman estimator \cite{Ba,GG}.  In this spirit, allow us to cite here a special version of \cite[Corollary 1.5]{RZ} by Ruldelson and Zeitounni, a result directly relevant to our simple goal above.

\begin{lemma} With $A_n$ as in Corollary \ref{cor:det},
$$\P\Big(|\log \dt(X_n)^2 - \log \perm (A_n)| \ge c_0 \sqrt{\lambda n} \log^{c_1}n\Big) \le \exp(-c_2 \log^4n).$$ 
\end{lemma}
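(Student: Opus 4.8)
The plan is to reduce the statement to a concentration inequality for $Z:=\log\dt(X_n)^2$ around a deterministic quantity that, a posteriori, differs from $\log\perm(A_n)$ by an admissible amount. First I would use the Gram--Schmidt (``base times height'') expansion of the volume to write $\dt(X_n)^2=\prod_{i=1}^{n}d_i^2$, where $d_i$ is the distance from the $i$-th row $R_i$ of $X_n$ to the span $H_i$ of the rows $R_{i+1},\dots,R_n$ (with $d_n=\|R_n\|$), so that $Z=\sum_{i=1}^n\log d_i^2$. Writing $R_i=g_iD_i^{1/2}$ with $g_i$ an independent standard Gaussian vector and $D_i=\diag(a_{i1},\dots,a_{in})$, and letting $\Pi_i$ be the orthogonal projection onto $H_i^{\perp}$, one has, conditionally on $R_{i+1},\dots,R_n$,
$$d_i^2=g_iM_ig_i^{\top},\qquad M_i:=D_i^{1/2}\Pi_iD_i^{1/2},$$
a positive semidefinite matrix of rank $i$ almost surely with $\|M_i\|\le\lambda$, smallest nonzero eigenvalue at least $\eps$, and $\tr M_i=\tr(D_i\Pi_i)=\sum_j a_{ij}(\Pi_i)_{jj}\in[\eps i,\lambda i]$ (using $0\le(\Pi_i)_{jj}\le1$, $\sum_j(\Pi_i)_{jj}=i$ and $\eps\le a_{ij}\le\lambda$). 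I would also record the exact identity $\E\,\dt(X_n)^2=\perm(A_n)$: expanding $\dt(X_n)^2=\sum_{\pi,\sigma\in S_n}\operatorname{sgn}(\pi\sigma)\prod_i x_{i\pi(i)}x_{i\sigma(i)}$, only the diagonal terms $\pi=\sigma$ survive taking expectation, contributing $\prod_i a_{i\pi(i)}$.

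Next I would establish concentration of each term $\log d_i^2$ around $\log\tr M_i$. Conditionally on the later rows, concentration of Gaussian quadratic forms (Hanson--Wright) gives $\P(|\log d_i^2-\log\tr M_i|\ge s)\le 2\exp(-c\,\eps i\min(s,s^2)/\lambda)$, while the bound on the smallest nonzero eigenvalue of $M_i$ yields $d_i^2\ge\eps\zeta^2$ for a standard Gaussian $\zeta$, hence $\P(\log d_i^2\le -s)\le C(\eps)e^{-s/2}$. Consequently the \emph{bulk} indices $i\ge K$, with $K\asymp(\lambda/\eps)\log^4 n$, satisfy $\log d_i^2=\log\tr M_i+O(\sqrt{\lambda/(\eps i)}\,\log^2 n)$ off an event of probability $\exp(-c\log^4 n)$, and the $O((\lambda/\eps)\log^4 n)$ remaining indices, after truncating $\log d_i^2$ at scale $(\lambda/(\eps i))\log^4 n$ (which costs probability $\exp(-c\log^4 n)$ per index by the two tail bounds), contribute at most $O((\lambda/\eps)\log^{O(1)}n)$ to $Z$ in total. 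I would then sum the contributions along the reverse-time martingale with increments $\log d_i^2-\E[\log d_i^2\mid R_{i+1},\dots,R_n]$ --- sub-exponential at scale $\sqrt{\lambda/(\eps i)}$ in the bulk --- via a Freedman-type inequality; since $\sum_i\lambda/(\eps i)=O((\lambda/\eps)\log n)$, this yields $|Z-\E Z|\le c_0\sqrt{\lambda n}\,\log^{c_1}n$ off an event of probability $\exp(-c_2\log^4 n)$, where the right-hand side is a deliberately lossy uniform substitute (valid since $1\le\lambda<n$ and $\eps$ is fixed) for the sharper $\operatorname{poly}(\lambda/\eps,\log n)$ bound the martingale actually produces.

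Finally I would compare $\E Z$ with $\log\perm(A_n)$. Jensen's inequality and $\E e^{Z}=\perm(A_n)$ give $\E Z\le\log\perm(A_n)$. For the reverse bound I would expand $\perm(A_n)=\E\prod_i d_i^2$ by integrating the rows out one at a time from the top, each step replacing $d_i^2$ by $\E[d_i^2\mid R_{i+1},\dots,R_n]=\tr M_i$ and using the rank-one update describing how $\Pi_j$ (for $j<i$) changes when $R_i$ enters the span; the resulting cross terms are nonnegative or bounded by $\lambda^{O(1)}$, giving $\log\perm(A_n)-\E Z\le C\log^{O(1)}n$. Equivalently, this step amounts to showing the leverage sums $\tr M_i=\sum_j a_{ij}(\Pi_i)_{jj}$ concentrate around $(i/n)\sum_j a_{ij}=i$ --- here the doubly stochastic normalization $\sum_j a_{ij}=n$ enters decisively --- so that the centering is $\sum_i\log i+O(\cdot)=\log n!+O(\cdot)$, in agreement with $\log\perm(A_n)$ up to $O(\lambda\log n)$ by van der Waerden's inequality \eqref{eqn:doubly:W} and Theorem \ref{theorem:stochastic}. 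Combining the three steps gives the claim. The hard part is the lower tail: ensuring that $\dt(X_n)$ is not anomalously small, i.e.\ that the least singular values of $X_n$ are not too small --- this is exactly what dictates both the truncation scale $\log^{O(1)}n$ for the low-rank quadratic forms and the $\exp(-c_2\log^4 n)$ failure probability --- together with the accompanying quantitative control of the leverage scores $(\Pi_i)_{jj}$ needed to locate the centering; both are furnished by the analysis of \cite{RZ}, of which this lemma is the cited special case.
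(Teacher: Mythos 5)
The paper does not prove this lemma at all: it is quoted verbatim as ``a special version of \cite[Corollary 1.5]{RZ}'', so the paper's ``proof'' is a citation to Rudelson--Zeitouni. Your outline reconstructs the right architecture for such a result (Gram--Schmidt factorization $\dt(X_n)^2=\prod_i d_i^2$, the representation $d_i^2=g_iM_ig_i^{\top}$ with $M_i=D_i^{1/2}\Pi_iD_i^{1/2}$, Hanson--Wright for the bulk, a small-ball bound for the last few rows, and the identity $\E\,\dt(X_n)^2=\perm(A_n)$), and the elementary facts you record ($\|M_i\|\le\lambda$, smallest nonzero eigenvalue $\ge\eps$, $\tr M_i\in[\eps i,\lambda i]$, the derivation of $\E\dt^2=\perm$) are all correct. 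This is indeed the strategy of \cite{RZ} and \cite{FRZ}, so as an explanation of \emph{why} the cited result applies, the plan is on target.

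As a standalone proof, however, there is a genuine gap, and it sits exactly where you locate ``the hard part.'' The martingale step only concentrates $Z=\log\dt(X_n)^2$ around the \emph{random} quantity $\sum_i\log\tr M_i$ (plus lower-order corrections); to reach the deterministic target $\log\perm(A_n)$ you must show that the leverage sums $\tr M_i=\sum_j a_{ij}(\Pi_i)_{jj}$ are close to $i$ with overwhelming probability, i.e.\ that the projections $\Pi_i$ onto the orthocomplements of spans of anisotropic Gaussian rows are delocalized. Your two proposed routes to this do not close the gap: (i) the ``integrate rows out one at a time, replacing $d_i^2$ by $\E[d_i^2\mid R_{i+1},\dots,R_n]=\tr M_i$'' computation is not valid as stated, because for $j<i$ the factor $d_j^2$ depends on $R_i$, so the conditional expectation does not pass through the product, and the assertion that the resulting cross terms are ``nonnegative or bounded by $\lambda^{O(1)}$'' is unsubstantiated (Jensen gives only the one-sided inequality $\E Z\le\log\perm(A_n)$; the reverse direction is the whole content of \cite{RZ} and is where the $\sqrt{\lambda n}$ loss originates); (ii) the appeal to van der Waerden plus Theorem \ref{theorem:stochastic} correctly gives $|\log\perm(A_n)-\log n!|=O(\lambda\log n)$, but you still need $|\sum_i\log\tr M_i-\log n!|\le c_0\sqrt{\lambda n}\log^{c_1}n$ with probability $1-\exp(-c_2\log^4 n)$, which is precisely the leverage-score concentration you have not proved and instead defer to \cite{RZ}. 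Since the lemma is itself the quoted \cite{RZ} statement, that deferral makes the argument circular. Either carry out the delocalization estimate for $(\Pi_i)_{jj}$ (this is the technical core of \cite{RZ}) or present the lemma, as the paper does, as a direct citation.
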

Corollary \ref{cor:det} then follows from this result and Theorem~\ref{theorem:stochastic}.

\subsection{Perfect matchings in random bipartite graphs in random environment} This subsection is motivated by another paper of Bochi, Iommi and Ponce \cite{BIP:matching}.

Let  $(\Omega, \CA,\P)$ be a Lebesgue probability space, and let $f: \Omega \to (0,1]$ be a function with $\log f \in L_\infty(\P)$. Suppose that $T$ is an ergodic measure-preserving  action on the semi-group $\BN^2$ on $(\Omega, \CA,\P)$ as in Theorem \ref{theorem:llp}, with $\CA_1$ and $\CA_2$ being the sub-$\sigma$-algebras formed by the $T^{(1,0)}$-invariant and the $T^{(0,1)}$-invariant sets respectively.

For each {\it environment} $\omega$ of the space $\Omega$, for each $n\ge 1$ we define a random bipartite graph $G_n(\omega)$ on the vertex sets $W_n=\{w_1,\dots, w_n\}$ and $M=\{m_1,\dots,m_n\}$ according to the following law $\P_n(\omega)$: for $1\le i,j\le n$, each edge $w_i m_j$ is chosen independently at random with probability 

$$a_{ij}(\omega) := f(T^{(i,j)} (\omega)).$$

Note that our random graph is {\it inhomogeneous} as the $a_{ij}$ can be totally different. It is clear that the number $N$ of perfect matchings in this random bipartite graph is 

$$N= \sum_{\pi \in S_n} \mathbf{1}_{w_1 m_{\pi(1)} \mbox{is an edge}} \cdots \mathbf{1}_{w_n m_{\pi(n)} \mbox{is an edge}}.$$
 
Thus the expected number of perfect matchings in $G_n(\omega)$ with respect to the law $\P_n(\omega)$ is

\begin{align*}
N_n(\omega):=\E_{\P_n(\omega)} N &= \E_{\P_n(\omega)}\sum_{\pi \in S_n} \mathbf{1}_{w_1 m_{\pi(1)} \mbox{is an edge}}   \cdots   \mathbf{1}_{w_n m_{\pi(n)} \mbox{is an edge}}\\
&= \sum_{\pi \in S_n} a_{w_1 m_{\pi(1)}} \cdots a_{w_n m_{\pi(n)}}\\
&= \perm(\Box_n(f(\omega))).
\end{align*}
By using Theorem \ref{theorem:llp}, we  obtain the following variant of law of large number for the number of perfect matchings of random bipartite graphs in random environement.

\begin{theorem} For $\CP$-almost every environment $\omega \in \Omega$, 
$$\lim_{n \to \infty} (\frac{N_n}{n!})^{1/n} = \sm_{\CA_1,\CA_2}(f). $$
\end{theorem}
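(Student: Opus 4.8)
The plan is to derive this final theorem as a direct corollary of Theorem~\ref{theorem:llp} (the Main result, which establishes Conjecture~\ref{conj:llp}), by simply unwinding the definition of the quantity $N_n(\omega)$ already computed in the excerpt. The key observation, done above, is the identity $N_n(\omega) = \perm(\Box_n f(\omega))$, which says that the expected number of perfect matchings of the inhomogeneous random bipartite graph $G_n(\omega)$ equals the permanent of the ``variance profile'' matrix $\Box_n f(\omega)$. Hence $(N_n/n!)^{1/n} = \per(\Box_n f(\omega))$ by the very definition of the permanental mean $\per(\cdot)$.

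With that identity in hand, the proof is immediate. First I would verify the hypotheses of Theorem~\ref{theorem:llp}: since $f: \Omega \to (0,1]$ with $\log f \in L_\infty(\P)$, and $T$ is an ergodic measure-preserving action of $\BN^2$ with $\CA_1, \CA_2$ the $T^{(1,0)}$- and $T^{(0,1)}$-invariant $\sigma$-algebras, we are precisely in the setting of Conjecture~\ref{conj:llp}. Theorem~\ref{theorem:llp} then gives, for $\P$-almost every $\omega$,
$$\lim_{n\to\infty} \frac{\per(\Box_n f(\omega))}{\sm_{\CA_1,\CA_2}(f)} = 1,$$
equivalently $\lim_{n\to\infty}\per(\Box_n f(\omega)) = \sm_{\CA_1,\CA_2}(f)$. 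Substituting $\per(\Box_n f(\omega)) = (N_n/n!)^{1/n}$ yields the claimed limit. One should note in passing that $\sm_{\CA_1,\CA_2}(f) > 0$ because $\log f \in L_\infty(\P)$ (indeed $f \ge \lambda^{-1}$ a.e.\ for some $\lambda$, so $\sm_{\CA_1,\CA_2}(f) \ge \lambda^{-1} > 0$ by the definition of the scaling mean and the AM--GM bound), so that dividing by it is legitimate and the statement is non-vacuous.

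There is essentially no obstacle here; the only point requiring a sentence of care is the exchange of expectation and summation in the computation of $N_n(\omega)$, which is already carried out in the excerpt and is justified because the sum over $S_n$ is finite and each indicator has expectation $a_{ij}(\omega) = f(T^{(i,j)}(\omega)) \in (0,1]$ under the product law $\P_n(\omega)$. The substantive content of the result is entirely contained in Theorem~\ref{theorem:llp}; this final theorem is a translation of that statement into the language of random bipartite graphs in a random environment, in the spirit of \cite{BIP:matching}. So the proof is a two-line argument: state the identity $N_n(\omega) = \perm(\Box_n f(\omega))$, hence $(N_n/n!)^{1/n} = \per(\Box_n f(\omega))$, and invoke Theorem~\ref{theorem:llp} together with \eqref{eqn:doubly} (or rather the positivity of $\sm_{\CA_1,\CA_2}(f)$) to conclude.
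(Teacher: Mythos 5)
Your proposal is correct and is essentially identical to the paper's own argument: the paper likewise computes $N_n(\omega)=\perm(\Box_n f(\omega))$ by linearity of expectation over the finite sum indexed by $S_n$ and then invokes Theorem~\ref{theorem:llp} to conclude. Your added remark on the positivity of $\sm_{\CA_1,\CA_2}(f)$ is a harmless extra sanity check not spelled out in the paper.
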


{\bf Acknowledgements.} The authors are thankful to S.~Leibman,
G.~Iommi  and to the anonymous referee for their  very helpful comments.

\end{document}